\def \To{\longrightarrow}
\def \Hom{\operatorname{Hom}}
\def \Vec{\operatorname{Vec}}
\def \H{\operatorname{H}}
\def \T{\operatorname{T}}
\def \N{\mathbb{N}}
\def \R{\mathcal{R}}
\def \Z{\mathbb{Z}}
\numberwithin{equation}{section}
\newtheorem{theorem}{Theorem}[section]
\newtheorem{lemma}[theorem]{Lemma}
\newtheorem{proposition}[theorem]{Proposition}
\newtheorem{corollary}[theorem]{Corollary}
\newtheorem{definition}[theorem]{Definition}
\newtheorem{remark}[theorem]{Remark}
\begin{document}
\title[BRAIDED MONOIDAL GR-CATEGORIES]{THE BRAIDED MONOIDAL STRUCTURES ON A CLASS OF LINEAR GR-CATEGORIES}
\subjclass[2010]{18D10, 20J06} \keywords{braided monoidal category, linear Gr-category, group cohomology} \thanks{Supported by the NSFC grants 10971206 and 11071111, the SDNSF grant 2009ZRA01128 and the IIFSDU grant 2010TS021.}
\author{Hua-Lin Huang}
\address{School of Mathematics, Shandong University, Jinan 250100, China} \email{hualin@sdu.edu.cn}
\author{Gongxiang Liu}
\address{Department of Mathematics, Nanjing University, Nanjing 210093, China}
\email{gxliu@nju.edu.cn}
\author{Yu Ye}
\address{School of Mathematics, USTC, Hefei 230026, China}
\email{yeyu@ustc.edu.cn}
\date{}
\maketitle

\begin{abstract}
A linear Gr-category is a category of finite-dimensional vector
spaces graded by a finite group together with the natural tensor product. We classify the braided monoidal
structures of a class of linear Gr-categories via explicit
computations of the normalized 3-cocycles and the quasi-bicharacters of finite
abelian groups which are direct product of two cyclic groups.
\end{abstract}

\section{Introduction}
By a linear Gr-category we mean a category of finite-dimensional vector spaces graded by a finite group together with the natural tensor product of graded vector spaces.  Linear Gr-categories arise naturally in such areas of mathematics as cohomology of groups, representation theory, tensor categories, and quantum groups. In 1975,
the monoidal structures of a Gr-category were related to the 3rd cohomology group of its grading group for the first time in the thesis of Ho\`{a}ng Xu\^{a}n S\'{i}nh
\cite{grc}. When the group is abelian, a Gr-category admits further a braiding and its braided monoidal structures are related to the 3rd abelian cohomology group, see \cite{js}.

We are mainly concerned about the applications of linear Gr-categories in the classification of finite pointed tensor categories \cite{egno, eo}. Note that, pointed fusion (i.e., semisimple tensor) categories are nothing other than linear Gr-categories \cite{eno}, moreover, the full subcategory of all semi-simple objects of a finite pointed tensor category is a linear Gr-category, therefore a thorough understanding of the monoidal structures of linear Gr-categories is indispensable for the purpose of our classification problem. In fact, the starting point of our investigation was the attempt to classify pointed tensor categories of tame type over the field of complex numbers. The full subcategory of semi-simple objects of any such tensor category is a linear Gr-category over a cyclic group or the direct product of two cyclic groups.

The crux for the classification of the monoidal structures on a linear Gr-category lies in an explicit and unified formula of the normalized 3-cocycles, not just the 3rd cohomology group, of the grading group. It is worthy to stress that, though the cohomology group of a finite group might be known, the explicit form of normalized cocycles is not necessarily clear. A naive reason is that, one may compute the cohomology group by the minimal (or any simpler) resolution, however one needs to work on the bar resolution to get normalized cocycles, see for example \cite{w}. In the case where the group is cyclic, the nice formula of the normalized 3-cocycles and the classification of the braided monoidal structures are presented in \cite{js}. These facts are important in the recent advances in the classification of finite pointed tensor categories whose invertible objects make cyclic groups, see for instance \cite{a,qha3}. However for non-cyclic groups, very few is known. To the best of our knowledge, the only result in this direction is \cite{bct} in which the group is the Klein four group and the computations therein are very technical and there seems no hope to extend to more general groups.

The aim of this note is to give explicit and unified formulae of the normalized 3-cocycles and the quasi-bicharacters for the direct product of two arbitrary finite cyclic groups, and hence provide a classification of the braided monoidal structures of the linear Gr-categories over such groups. Our main idea is to construct a chain map, up to the 3rd term, from the bar resolution to a simpler resolution. In the latter resolution, the cocycles are ready to be handled with. By the chain map, the computation of normalized 3-cocycles is thus transited to a much easier situation. Our results vastly extend those obtained in \cite{js,bct}. This also opens a door to the classification theory of finite pointed quasi-quantum groups and finite pointed tensor categories \cite{qha1,qha2,qha3} which will be a nontrivial generalization of the beautiful theory of finite pointed quantum groups, see \cite{as2} and references therein.

The note is organized as follows. In Section 2, we give an explicit chain map from the bar resolution to the minimal resolution of a finite cyclic group. This provides a hint to deal with the case of the direct product of two finite cyclic groups which is carried out in Section 3. The first three terms of a chain map are constructed and thus the normalized cocycles up to degree 3 and the quasi-bicharacters are obtained. In Section 4, we give the classification of the braided monoidal structures of linear Gr-categories over the direct product of any two finite cyclic groups.

Throughout, $k$ is an algebraically closed field of characteristic zero and let $k^{\ast}$ denote the multiplicative group $k\backslash\{0\}.$

\section{The cocycles of cyclic groups}
In this section, we construct a chain map from the bar resolution to the minimal resolution of any finite cyclic group. This leads to explicit and unified formulae for cocycles of all degrees. As mentioned above, the results up to 3-cocycles were known in literatures. The results for cocycles of higher degrees seem to be new.

Let $G=\mathbb{Z}_{m}=\langle g|g^{m}=1\rangle$ be the cyclic group of order $m.$ The trivial $\mathbb{Z}G$-module
$\mathbb{Z}$ has the following minimal resolution (see \cite[Section 6.2]{w})
\begin{equation}\cdots\longrightarrow \mathbb{Z}\mathbb{Z}_{m}\stackrel{g-1}\longrightarrow
\mathbb{Z}\mathbb{Z}_{m}\stackrel{N_{m}}\longrightarrow\mathbb{Z}\mathbb{Z}_{m}\stackrel{g-1}\longrightarrow
\mathbb{Z}\mathbb{Z}_{m}\stackrel{N_{m}}\longrightarrow
\mathbb{Z}\longrightarrow 0,\end{equation}
where $N_{m}=\sum_{i=0}^{m-1}g^{i}$. Let $(M_{\bullet},d_{\bullet})$ denote this resolution. To avoid confusion, denote the generator of the $i$-th free module in (2.1) by $\Psi_{i}$ for $i\geq 0.$ Thus the differential of (2.1) can be described in the following way
$$
d(\Psi_{i})=\left \{
\begin{array}{ll}  (g-1)\Psi_{i-1} &\;\;\;\; i\;\textrm{odd}
\\N_{m} \Psi_{i-1}&
\;\;\;\; i\;\textrm{even}
\end{array}.\right. $$

By $(B_{\bullet},\partial_{\bullet})$ we denote the bar resolution of the trivial $\mathbb{Z}G$-module
$\mathbb{Z}$. That is, $$B_{n}=\bigoplus_{0\leq i_{1},\ldots,i_{n}\leq m-1}\mathbb{Z}G[g^{i_{1}},\cdots,g^{i_{n}}]$$
and
\begin{eqnarray*}&&\partial_{n}([g^{i_{1}},\cdots,g^{i_{n}}])\\
&=&g^{i_{1}}[g^{i_{2}},\cdots,g^{i_{n}}]+\sum_{j=1}^{n-1}(-1)^{j}[g^{i_{1}},\cdots,g^{i_{j}}g^{i_{j+1}},\cdots,g^{i_{n}}]
+(-1)^{n}[g^{i_{1}},\cdots,g^{i_{n-1}}].
\end{eqnarray*}

The main objective of this section is to give a chain map from the bar resolution to the minimal resolution. As preparation, we need to fix some notations and give a technical lemma. For a natural number $i,$ we denote by $i'$ the remainder of division of $i$ by $m.$ Given a rational number $x,$ let $[x]$ denote the integer part of $x,$ i.e., the largest integer not greater than $x.$ The following technical lemma is useful in our later arguments and will be used freely.

\begin{lemma} For any two natural numbers $i$ and $j,$ we have
\begin{equation}[\frac{i+j'}{m}]=[\frac{i+j}{m}]-[\frac{j}{m}].
\end{equation}\end{lemma}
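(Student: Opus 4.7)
The identity is essentially the statement that the integer part is unaffected, up to a shift by an integer, when the numerator is reduced modulo $m$. My plan is to write $j = qm + j'$ with $q = [j/m]$ and $0 \leq j' < m$ (this is just the definition of $j'$), substitute into both sides, and use the elementary fact that for any real number $x$ and any integer $n$, $[x - n] = [x] - n$.

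Concretely, first I would record the defining equation $j' = j - [j/m]\,m$. Plugging this into the left-hand side gives
\[
\left[\frac{i+j'}{m}\right] = \left[\frac{i+j - [j/m]\,m}{m}\right] = \left[\frac{i+j}{m} - [j/m]\right].
\]
Since $[j/m]$ is an integer, pulling it out of the floor yields $[\tfrac{i+j}{m}] - [j/m]$, which is the right-hand side of (2.2).

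There is no real obstacle here; the statement is a one-line consequence of the division algorithm and the translation invariance of the floor function under integer shifts. The only subtlety worth flagging is that the identity uses $j'$ on the left but $j$ (not $i'$) on the right, so one should be careful that no reduction of $i$ is being performed—indeed the argument works for any natural number $i$, whether or not $i < m$.
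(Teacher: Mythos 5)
Your proof is correct and is exactly the argument the paper gives: substitute $j' = j - [\tfrac{j}{m}]m$ into the left-hand side and pull the integer $[\tfrac{j}{m}]$ out of the floor. The paper compresses this into a single displayed line, but the decomposition and the key fact (translation invariance of the integer part under integer shifts) are identical.
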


\begin{proof} $ [\frac{i+j'}{m}]=[\frac{i+j-[\frac{j}{m}]m}{m}]=[\frac{i+j}{m}]-[\frac{j}{m}].$ \end{proof}

Now we are ready to give the desired chain map $F: (B_{\bullet},\partial_{\bullet}) \To (M_{\bullet},d_{\bullet}).$ Let
\begin{gather}F_{2k+1}:\;[g^{i_{1}},\cdots,g^{i_{2k+1}}]\mapsto \sum_{\alpha=0}^{i_{1}-1}[\frac{i_{2}+i_{3}}{m}]\cdots[\frac{i_{2k}+i_{2k+1}}{m}]g^{\alpha}\Psi_{2k+1},\;\;\;\;k\geq 0;\\
F_{2k}:\;[g^{i_{1}},\cdots,g^{i_{2k}}]\mapsto [\frac{i_{1}+i_{2}}{m}]\cdots[\frac{i_{2k-1}+i_{2k}}{m}]\Psi_{2k},\;\;\;\;k\geq 1.\notag
\end{gather} Here, if $i_{1}=0$, then $\sum_{\alpha=0}^{i_{1}-1}g^{\alpha}$ is understood as zero.

\begin{lemma} The map $F=\{F_{i}|i\geq 1\}$ defined above is a chain map from the bar resolution to the minimal resolution.
\end{lemma}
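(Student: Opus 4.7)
The plan is to verify the chain-map identity $d_n\circ F_n = F_{n-1}\circ\partial_n$ directly, splitting into two cases according to the parity of $n$, because the differential of the minimal resolution alternates between multiplication by $g-1$ (odd degrees) and by $N_m$ (even degrees), and because the pairing convention of $F$ also depends on parity. In both cases the overall strategy is the same: compute the LHS, then expand $F_{n-1}\partial_n$ term by term using the bar differential, rewrite each bracket of the form $\bigl[\frac{a+(b+c)'}{m}\bigr]$ by Lemma 2.1 as $\bigl[\frac{a+b+c}{m}\bigr]-\bigl[\frac{b+c}{m}\bigr]$, and observe that the ``three-index'' parts $\bigl[\frac{a+b+c}{m}\bigr]$ cancel in consecutive pairs while the remaining brackets telescope.

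For the odd case $n=2k+1$, the LHS is $\bigl(\prod_{\ell=1}^{k}[\tfrac{i_{2\ell}+i_{2\ell+1}}{m}]\bigr)(g^{i_1}-1)\Psi_{2k}$, since $\sum_{\alpha=0}^{i_1-1}g^\alpha(g-1)=g^{i_1}-1$. On the right, write $A_\ell=[\tfrac{i_{2\ell-1}+i_{2\ell}}{m}]$ and $B_\ell=[\tfrac{i_{2\ell}+i_{2\ell+1}}{m}]$. The leading boundary term yields $g^{i_1}B_1B_2\cdots B_k\Psi_{2k}$ and the trailing one yields $-A_1A_2\cdots A_k\Psi_{2k}$. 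For the interior terms $(-1)^j[\,\cdots,g^{(i_j+i_{j+1})'},\cdots]$, Lemma 2.1 rewrites the one affected bracket, and pairing the consecutive $j=2\ell-1$ and $j=2\ell$ contributions collapses the three-index pieces. The remaining $2k$ summands, once reindexed, form the telescoping identity
\begin{equation*}
\sum_{\ell=1}^{k}A_1\cdots A_{\ell-1}(A_\ell-B_\ell)B_{\ell+1}\cdots B_k \;=\; A_1\cdots A_k - B_1\cdots B_k,
\end{equation*}
which exactly compensates the leading and trailing boundary terms and leaves $(g^{i_1}-1)B_1\cdots B_k\Psi_{2k}$, matching the LHS.

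For the even case $n=2k$, the LHS is $A_1A_2\cdots A_k\,N_m\Psi_{2k-1}$. Here an additional ingredient is required: since $g^m=1$, one has
\begin{equation*}
\sum_{\alpha=0}^{i_1+i_2-1}g^\alpha \;=\; \Bigl[\tfrac{i_1+i_2}{m}\Bigr]N_m+\sum_{\alpha=0}^{(i_1+i_2)'-1}g^\alpha.
\end{equation*}
Combining the leading boundary term $g^{i_1}F_{2k-1}[g^{i_2},\ldots,g^{i_{2k}}]$ with the $j=1$ interior term, this identity extracts the desired $A_1N_m$ factor and leaves a residual $-A_2\cdots A_k\sum_{\alpha=0}^{i_1-1}g^\alpha\Psi_{2k-1}$. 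The trailing boundary term contributes $+B_1\cdots B_{k-1}\sum_{\alpha=0}^{i_1-1}g^\alpha\Psi_{2k-1}$. Pairing up the remaining interior terms $j=2\ell$ with $j=2\ell+1$ (for $\ell=1,\ldots,k-1$) produces by the same Lemma 2.1 mechanism a telescoping sum equal to $\bigl(A_2\cdots A_k-B_1\cdots B_{k-1}\bigr)\sum_{\alpha=0}^{i_1-1}g^\alpha\Psi_{2k-1}$, which cancels the two residual terms and delivers the LHS.

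The main obstacle is purely combinatorial bookkeeping: one must correctly determine, for each collapse position $j$, where the altered bracket sits in the $F_{n-1}$-pairing pattern (which shifts depending on the parity of $j$), verify that signs from $(-1)^j$ align so that the three-index brackets genuinely cancel in pairs rather than accumulate, and confirm that the resulting $A$-$B$ telescoping lands on the correct endpoint products. Once these index patterns are laid out carefully, both parities reduce to the same two mechanisms (a Lemma 2.1 rewrite followed by a telescoping collapse), with the even case additionally invoking the cyclicity of $g$ to generate $N_m$.
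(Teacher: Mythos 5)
Your proposal is correct and follows essentially the same route as the paper: a direct verification of $dF_n=F_{n-1}\partial_n$ in each parity, using Lemma 2.1 to split each bracket $[\frac{a+(b+c)'}{m}]$ and the identity $\sum_{\alpha=0}^{i+j-1}g^{\alpha}=[\frac{i+j}{m}]N_m+\sum_{\alpha=0}^{(i+j)'-1}g^{\alpha}$ to produce the norm element, after which the interior terms cancel in consecutive pairs. Your explicit telescoping identity $\sum_{\ell}A_1\cdots A_{\ell-1}(A_\ell-B_\ell)B_{\ell+1}\cdots B_k=A_1\cdots A_k-B_1\cdots B_k$ is just a cleaner packaging of the cancellation the paper carries out term by term.
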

\begin{proof} We verify the claim by direct computations. Indeed, for any  $0\leq i_{1},\ldots,i_{2k+1}\leq m-1$, we have
\begin{eqnarray*}
&&F_{2k}(\partial_{2k+1}([g^{i_{1}},\cdots,g^{i_{2k+1}}]))\\
&=&F_{2k}(g^{i_{1}}[g^{i_{2}},\cdots,g^{i_{2k+1}}]
+\sum_{j=1}^{2k}(-1)^{j}[g^{i_{1}},\cdots,g^{i_{j}}g^{i_{j+1}},\cdots,g^{i_{2k+1}}]
-[g^{i_{1}},\cdots,g^{i_{2k}}])\\
&=&(g^{i_{1}}[\frac{i_{2}+i_{3}}{m}]\cdots[\frac{i_{2k}+i_{2k+1}}{m}]\\
&& -[\frac{(i_{1}+i_{2})'+i_{3}}{m}]\cdots [\frac{i_{2k}+i_{2k+1}}{m}]+[\frac{i_{1}+(i_{2}+i_{3})'}{m}]\cdots [\frac{i_{2k}+i_{2k+1}}{m}]
-\cdots\\
&&-[\frac{i_{1}+i_{2}}{m}]\cdots[\frac{(i_{2k-1}+i_{2k})'+i_{2k+1}}{m}]+[\frac{i_{1}+i_{2}}{m}]\cdots[\frac{i_{2k-1}+(i_{2k}+i_{2k+1})'}{m}]
\\
&&-[\frac{i_{1}+i_{2}}{m}]\cdots[\frac{i_{2k-1}+i_{2k}}{m}])\Psi_{2k}\\
&=&(g^{i_{1}}[\frac{i_{2}+i_{3}}{m}]\cdots[\frac{i_{2k}+i_{2k+1}}{m}]\\
&& +[\frac{i_{1}+i_{2}}{m}]\cdots [\frac{i_{2k}+i_{2k+1}}{m}]-[\frac{i_{2}+i_{3}}{m}]\cdots [\frac{i_{2k}+i_{2k+1}}{m}]
-\cdots\\
&&+[\frac{i_{1}+i_{2}}{m}]\cdots[\frac{i_{2k-1}+i_{2k}}{m}]+[\frac{i_{1}+i_{2}}{m}]\cdots[\frac{i_{2k}+i_{2k+1}}{m}]
\\
&&-[\frac{i_{1}+i_{2}}{m}]\cdots[\frac{i_{2k-1}+i_{2k}}{m}])\Psi_{2k}\\
&=&[\frac{i_{2}+i_{3}}{m}]\cdots[\frac{i_{2k}+i_{2k+1}}{m}](g^{i_{1}}-1)\Psi_{2k}\\
&=&\sum_{\alpha=0}^{i_{1}-1}[\frac{i_{2}+i_{3}}{m}]\cdots[\frac{i_{2k}+i_{2k+1}}{m}]g^{\alpha}(g-1)\Psi_{2k}\\
&=&dF_{2k+1}([g^{i_{1}},\cdots,g^{i_{2k+1}}]),
\end{eqnarray*}
and
\begin{eqnarray*}
&&F_{2k-1}(\partial_{2k}([g^{i_{1}},\cdots,g^{i_{2k}}]))\\
&=&F_{2k}(g^{i_{1}}[g^{i_{2}},\cdots,g^{i_{2k}}]
+\sum_{j=1}^{2k-1}(-1)^{j}[g^{i_{1}},\cdots,g^{i_{j}}g^{i_{j+1}},\cdots,g^{i_{2k}}]
+[g^{i_{1}},\cdots,g^{i_{2k-1}}])\\
&=&(g^{i_{1}}\sum_{\alpha=0}^{i_{2}-1}[\frac{i_{3}+i_{4}}{m}]\cdots[\frac{i_{2k-1}+i_{2k}}{m}]g^{\alpha}-
\sum_{\alpha=0}^{(i_{1}+i_{2})'-1}[\frac{i_{3}+i_{4}}{m}]\cdots[\frac{i_{2k-1}+i_{2k}}{m}]g^{\alpha}\\
&&+\sum_{\alpha=0}^{i_{1}-1}[\frac{(i_{2}+i_{3})'+i_{4}}{m}]\cdots[\frac{i_{2k-1}+i_{2k}}{m}]g^{\alpha} - \sum_{\alpha=0}^{i_{1}-1}[\frac{i_{2}+(i_{3}+i_{4})'}{m}]\cdots[\frac{i_{2k-1}+i_{2k}}{m}]g^{\alpha}\\
&&+\cdots
+\sum_{\alpha=0}^{i_{1}-1}[\frac{i_{2}+i_{3}}{m}]\cdots[\frac{(i_{2k-2}+i_{2k-1})'+i_{2k}}{m}]g^{\alpha}\\
&&-
\sum_{\alpha=0}^{i_{1}-1}[\frac{i_{2}+i_{3}}{m}]\cdots[\frac{i_{2k-2}+(i_{2k-1}+i_{2k})'}{m}]g^{\alpha}\\
&&+\sum_{\alpha=0}^{i_{1}-1}[\frac{i_{2}+i_{3}}{m}]\cdots[\frac{i_{2k-2}+i_{2k-1}}{m}]g^{\alpha})\Psi_{2k-1}\\
&=&(g^{i_{1}}\sum_{\alpha=0}^{i_{1}-1}[\frac{i_{3}+i_{4}}{m}]\cdots[\frac{i_{2k-1}+i_{2k}}{m}]g^{\alpha}-
\sum_{\alpha=0}^{(i_{1}+i_{2})-1}[\frac{i_{3}+i_{4}}{m}]\cdots[\frac{i_{2k-1}+i_{2k}}{m}]g^{\alpha}\\
&&+[\frac{i_{1}+i_{2}}{m}][\frac{i_{3}+i_{4}}{m}]\cdots[\frac{i_{2k-1}+i_{2k}}{m}]N_{m}\\
&&-\sum_{\alpha=0}^{i_{1}-1}[\frac{i_{2}+i_{3}}{m}]\cdots[\frac{i_{2k-1}+i_{2k}}{m}]g^{\alpha} + \sum_{\alpha=0}^{i_{1}-1}[\frac{i_{3}+i_{4}}{m}]\cdots[\frac{i_{2k-1}+i_{2k}}{m}]g^{\alpha} \\
&&+\cdots-\sum_{\alpha=0}^{i_{1}-1}[\frac{i_{2}+i_{3}}{m}]\cdots[\frac{i_{2k-2}+i_{2k-1}}{m}]g^{\alpha}
+\sum_{\alpha=0}^{i_{1}-1}[\frac{i_{2}+i_{3}}{m}]\cdots[\frac{i_{2k-1}+i_{2k}}{m}]g^{\alpha}\\
&&+\sum_{\alpha=0}^{i_{1}-1}[\frac{i_{2}+i_{3}}{m}]\cdots[\frac{i_{2k-2}+i_{2k-1}}{m}]g^{\alpha})\Psi_{2k-1}\\
&=&(g^{i_{1}}\sum_{\alpha=0}^{i_{1}-1}[\frac{i_{3}+i_{4}}{m}]\cdots[\frac{i_{2k-1}+i_{2k}}{m}]g^{\alpha}-
\sum_{\alpha=0}^{(i_{1}+i_{2})-1}[\frac{i_{3}+i_{4}}{m}]\cdots[\frac{i_{2k-1}+i_{2k}}{m}]g^{\alpha}\\
&&+[\frac{i_{1}+i_{2}}{m}][\frac{i_{3}+i_{4}}{m}]\cdots[\frac{i_{2k-1}+i_{2k}}{m}]N_{m}+\sum_{\alpha=0}^{i_{1}-1}[\frac{i_{3}+i_{4}}{m}]\cdots[\frac{i_{2k-1}+i_{2k}}{m}]g^{\alpha})\Psi_{2k-1}\\
&=&[\frac{i_{1}+i_{2}}{m}][\frac{i_{3}+i_{4}}{m}]\cdots[\frac{i_{2k-1}+i_{2k}}{m}]N_{m}\Psi_{2k-1}\\
&=&dF_{2k}([g^{i_{1}},\cdots,g^{i_{2k}}]).
\end{eqnarray*}
\end{proof}

Since $k$ is algebraically closed, $\H^{l}(\mathbb{Z}_{m},k^{\ast})\cong k^{\ast}/(k^{\ast})^{m}=0$ if $l$ is even. So there is no non-trivial $l$-cocycle whenever $l$ is even.

\begin{proposition} Suppose $l$ is odd and $\zeta_{m}$ an $m$-th primitive root of unity. Then the set of maps $$\omega_{a}:\;B_{l}\to k^{\ast},\;\;
[g^{i_{1}},\cdots,g^{i_{l}}]\mapsto \zeta_{m}^{ai_{1}[\frac{i_{2}+i_{3}}{m}]\cdots[\frac{i_{l-1}+i_{l}}{m}]},\;\;\;\;0\leq a< m$$
forms a complete set of representatives of $l$-cocycles.
\end{proposition}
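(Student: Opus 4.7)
The plan is to leverage the chain map $F:(B_\bullet,\partial_\bullet)\to(M_\bullet,d_\bullet)$ constructed in Lemma 2.2. Since both are projective resolutions of the trivial $\Z\Z_m$-module $\Z$, the comparison theorem of homological algebra ensures that $F$ induces an isomorphism on cohomology
$$F^*:\H^*(\Hom_{\Z\Z_m}(M_\bullet,k^*))\longrightarrow \H^*(\Hom_{\Z\Z_m}(B_\bullet,k^*))=\H^*(\Z_m,k^*).$$
The strategy is therefore to pick clean representatives on the minimal side and transport them through $F_l$ to the bar side.

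First, I would read off the cochain complex obtained by applying $\Hom_{\Z\Z_m}(-,k^*)$ to the minimal resolution (2.1). Since $\Z_m$ acts trivially on $k^*$, each term reduces to $k^*$, the differential dual to multiplication by $g-1$ becomes trivial, while the one dual to $N_m$ becomes $c\mapsto c^m$. Hence for odd $l$ every $\Z\Z_m$-linear map $f:M_l\to k^*$ is automatically a cocycle, and two such are cohomologous exactly when $f(\Psi_l)$ and $f'(\Psi_l)$ differ by an $m$-th power. Because $k$ is algebraically closed, $k^*/(k^*)^m\cong\mu_m$, so a full set of representatives on the minimal side is given by $f_a:\Psi_l\mapsto\zeta_m^a$ for $0\le a<m$.

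Next I would pull these back. For odd $l=2k+1$, formula (2.3) gives
$$F_l([g^{i_1},\ldots,g^{i_l}])=\sum_{\alpha=0}^{i_1-1}\left[\frac{i_2+i_3}{m}\right]\cdots\left[\frac{i_{l-1}+i_l}{m}\right]g^\alpha\Psi_l.$$
Because $\Z_m$ acts trivially on $k^*$, each $f_a(g^\alpha\Psi_l)$ equals $\zeta_m^a$, so the $i_1$ summands combined with the integer coefficient $[\tfrac{i_2+i_3}{m}]\cdots[\tfrac{i_{l-1}+i_l}{m}]$ pulled into the exponent yield precisely the formula claimed for $\omega_a=f_a\circ F_l$. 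This shows each $\omega_a$ is a cocycle representing $F^*[f_a]$.

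The remaining point is that the $m$ classes $[\omega_a]$ with $0\le a<m$ are pairwise distinct and exhaust $\H^l(\Z_m,k^*)$. Completeness follows immediately from $F^*$ being an isomorphism and from the $f_a$ exhausting the cohomology on the minimal side. Distinctness can likewise be read off from $F^*$, but can also be seen directly by evaluating $\omega_a$ on the $l$-tuple $[g,g,g^{m-1},g,g^{m-1},\ldots,g,g^{m-1}]$: here $i_1=1$ and each bracket equals $[\tfrac{1+(m-1)}{m}]=1$, so the value is $\zeta_m^a$, which visibly depends on $a$. I do not expect any real obstacle here; the substantive work was already accomplished in the construction of $F$ in Lemma 2.2.
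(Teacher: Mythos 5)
Your overall strategy is exactly the paper's: compute cohomology on the minimal resolution, take the representatives $f_a\colon\Psi_l\mapsto\zeta_m^a$ there, and transport them through the chain map $F$ of Lemma 2.2, using the comparison theorem to see that $F^*$ is an isomorphism. However, your computation of the dualized minimal complex has the parities swapped, and the step as written fails. In (2.1) the differential $d_{l+1}\colon M_{l+1}\to M_l$ is multiplication by $N_m$ when $l+1$ is even and by $g-1$ when $l+1$ is odd. Hence for \emph{odd} $l$ the outgoing dual map $\Hom_{\Z\Z_m}(M_l,k^{\ast})\to\Hom_{\Z\Z_m}(M_{l+1},k^{\ast})$ is $c\mapsto c^m$ and the incoming one is trivial: a cochain $f$ is a cocycle iff $f(\Psi_l)^m=1$ (not automatically), and the coboundary subgroup is trivial (not $(k^{\ast})^m$). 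Your version yields $\H^l\cong k^{\ast}/(k^{\ast})^m$ in odd degree, and over an algebraically closed field of characteristic zero this quotient is \emph{trivial} --- your identification of it with the group of $m$-th roots of unity is false. Taken literally, your computation says the odd cohomology vanishes, contradicting $\H^l(\Z_m,k^{\ast})\cong\Z_m$; what you describe is the correct picture for \emph{even} $l$, which is precisely why the even cohomology vanishes, as the paper notes just before the proposition. You arrive at the right representatives only because two errors cancel.

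Once the parities are corrected the argument is sound and coincides with the paper's: the cocycles on the minimal side are exactly the $f$ with $f(\Psi_l)\in\langle\zeta_m\rangle$, no two distinct ones are cohomologous (the coboundaries are trivial in odd degree), and $f_a\circ F_l=\omega_a$ by formula (2.3), so the $\omega_a$ form a complete set of representatives. One further small point: evaluating $\omega_a$ on $[g,g,g^{m-1},\ldots,g,g^{m-1}]$ only shows the $\omega_a$ are distinct as functions, not as cohomology classes; the class-level distinctness should instead be read off from $F^*$ being an isomorphism together with the (corrected) fact that the odd-degree coboundaries on the minimal side are trivial.
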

\begin{proof} It is well known that $\H^{l}(\mathbb{Z}_{m},k^{\ast})\cong \mathbb{Z}_{m}$. So it is enough to show that $\omega_{a}$ is an $l$-cocycle. Consider the minimal resolution,
we know any $l$-cochain $f:(\mathbb{Z}G)\Psi_{l}\to k^{\ast}$ is uniquely determined by the value $f_{l}:=f(\Psi_{l})$. It is not hard to see that $f$ is a cocycle if and only if
$f_{l}$ is an $m$-th root of unity. So there is some natural number $a \in [0,m)$ such that $f_{l}=\zeta_{m}^{a}$. By Lemma 2.2, $\omega_{a}$ is an $l$-cocycle and they indeed form a complete set of representatives of $l$-cocycles.
\end{proof}

\section{The cocycles and quasi-bicharacters of $\mathbb{Z}_{m}\times \mathbb{Z}_{n}$}
In this section, firstly we give a $\mathbb{Z}_{m}\times \mathbb{Z}_{n}$ -resolution of $\mathbb{Z}$ which is the tensor product of the minimal resolutions of cyclic factors as given in Section 2, then provide the first 3 terms of a chain map from the bar resolution to this resolution. This enables us to obtain explicit formulae for the desired 3-cocycles and quasi-bicharacters. As byproducts, we also get some results on 2-cocycles and 2nd cohomology group which are obviously important in the cohomology and representation theory of groups.

\subsection{A resolution.}
Let $g_{1}$ (resp. $g_{2}$) be a generator of $\mathbb{Z}_{m}$
(resp. $\mathbb{Z}_{n}$). The norm in $\mathbb{Z}\mathbb{Z}_{m}$ is
the element $N_{m}=\sum_{i=0}^{m-1}g_{1}^{i}$. As given in Section 2, the
following periodic sequence is a projective resolution of the
trivial $\mathbb{Z}_{m}$-module $\mathbb{Z}$
\begin{equation}\cdots\longrightarrow \mathbb{Z}\mathbb{Z}_{m}\stackrel{g_{1}-1}\longrightarrow
\mathbb{Z}\mathbb{Z}_{m}\stackrel{N_{m}}\longrightarrow\mathbb{Z}\mathbb{Z}_{m}\stackrel{g_{1}-1}\longrightarrow
\mathbb{Z}\mathbb{Z}_{m}\stackrel{N_{m}}\longrightarrow
\mathbb{Z}\longrightarrow 0.\end{equation} Denote $g_{1}-1$ by $T_{m}$ for
convenience. Similarly one can define $N_{n}$ and $T_{n}$ and has the following projective resolution
of the trivial $\mathbb{Z}_{n}$-module $\mathbb{Z}$
\begin{equation}\cdots\longrightarrow \mathbb{Z}\mathbb{Z}_{n}\stackrel{T_{n}}\longrightarrow
\mathbb{Z}\mathbb{Z}_{n}\stackrel{N_{n}}\longrightarrow\mathbb{Z}\mathbb{Z}_{n}\stackrel{T_{n}}\longrightarrow
\mathbb{Z}\mathbb{Z}_{n}\stackrel{N_{n}}\longrightarrow
\mathbb{Z}\longrightarrow 0.\end{equation}

We construct the tensor product of above periodic resolutions for
$\mathbb{Z}_{m}$ and $\mathbb{Z}_{n}$. Let $K_{\bullet}$ be the following complex of projective (actually,
free) $\mathbb{Z}(\mathbb{Z}_{m}\times \mathbb{Z}_{n})$-modules. For
each pair $(i,j)$ of nonnegative integers, let $\Psi(i,j)$ be a free
generator in degree $i+j$. Thus
$$K_{l}:=\bigoplus_{i+j=l} \mathbb{Z}(\mathbb{Z}_{m}\times
\mathbb{Z}_{n})\Psi(i,j).$$ For the differential, define
$$d_{1}(\Psi(i,j))=\left \{
\begin{array}{lll} 0 &\;\;\;\;i=0
\\ N_{m}\Psi(i-1,j) & \;\;\;\;0\neq i\;\textrm{even}
\\T_{m} \Psi(i-1,j)&
\;\;\;\;0\neq i\;\textrm{odd}
\end{array};\right.$$
$$
d_{2}(\Psi(i,j))=\left \{
\begin{array}{lll}  0 &\;\;\;\;j=0\\(-1)^{i}N_{n}\Psi(i,j-1) & \;\;\;\;0\neq j\;\textrm{even}
\\(-1)^{i}T_{n} \Psi(i,j-1)&
\;\;\;\;0\neq j\;\textrm{odd}
\end{array}.\right. $$
The differential $d$ is just defined to be $d_{1}+d_{2}$.

\begin{lemma} $(K_{\bullet},d)$ is a free resolution of trivial $\mathbb{Z}(\mathbb{Z}_{m}\times
\mathbb{Z}_{n})$-module $\mathbb{Z}$.
\end{lemma}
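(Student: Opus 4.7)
The plan is to identify $(K_\bullet, d)$ with the total complex of the double complex obtained by tensoring, over $\mathbb{Z}$, the minimal resolutions (3.1) and (3.2) of the trivial modules. Denote these resolutions by $L_\bullet$ (for $\mathbb{Z}_m$) and $M_\bullet$ (for $\mathbb{Z}_n$), with free generators $\Psi_i^L$ and $\Psi_j^M$ in degrees $i, j \geq 0$. Then setting $\Psi(i,j) = \Psi_i^L \otimes \Psi_j^M$ gives the natural identification
\[
K_l \;=\; \bigoplus_{i+j=l} L_i \otimes_{\mathbb{Z}} M_j,
\]
and the canonical isomorphism $\mathbb{Z}\mathbb{Z}_m \otimes_{\mathbb{Z}} \mathbb{Z}\mathbb{Z}_n \cong \mathbb{Z}(\mathbb{Z}_m \times \mathbb{Z}_n)$ makes each summand a free module of rank one over $\mathbb{Z}(\mathbb{Z}_m \times \mathbb{Z}_n)$. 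Freeness of every $K_l$ is thus immediate.

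Next I would verify that $d = d_1 + d_2$ is a differential. Viewing $d_1$ as $d_L \otimes \mathrm{id}$ and $d_2$ on $L_i \otimes M_j$ as the signed operator $(-1)^i \mathrm{id} \otimes d_M$, which matches the definition given in the statement, the identities $d_1^2 = 0$ and $d_2^2 = 0$ are inherited from $L_\bullet$ and $M_\bullet$ being complexes, while $d_1 d_2 + d_2 d_1 = 0$ is forced precisely by the presence of the sign $(-1)^i$ in $d_2$. Consequently $d^2 = 0$.

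For exactness, I would invoke the Künneth formula for chain complexes. The modules of $L_\bullet$ and $M_\bullet$ are free (hence flat) over $\mathbb{Z}$, and each of these augmented complexes is a resolution of $\mathbb{Z}$, so their unaugmented homologies are concentrated in degree $0$ with value $\mathbb{Z}$. Since $\mathbb{Z}$ is torsion-free over $\mathbb{Z}$, all Tor terms vanish, and Künneth yields
\[
H_l(K_\bullet) \;\cong\; \bigoplus_{i+j=l} H_i(L_\bullet) \otimes_{\mathbb{Z}} H_j(M_\bullet),
\]
which is $\mathbb{Z}$ for $l=0$ and zero for $l>0$. Augmenting by the counit $\mathbb{Z}(\mathbb{Z}_m \times \mathbb{Z}_n) \to \mathbb{Z}$ then displays $(K_\bullet, d)$ as a free resolution of the trivial module. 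The only real obstacle is the sign bookkeeping needed to confirm the anticommutation $d_1 d_2 + d_2 d_1 = 0$ case by case (according to the parities of $i$ and $j$); the homological input, namely Künneth, and the identification of tensor products of group rings, are entirely standard.
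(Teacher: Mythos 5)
Your proposal is correct and follows exactly the paper's own route: the authors likewise observe that $(K_{\bullet},d)$ is the tensor product complex of the two periodic resolutions (3.1) and (3.2) and invoke the K\"unneth formula for complexes. You have merely filled in the details (freeness via $\mathbb{Z}\mathbb{Z}_m\otimes_{\mathbb{Z}}\mathbb{Z}\mathbb{Z}_n\cong\mathbb{Z}(\mathbb{Z}_m\times\mathbb{Z}_n)$, the sign check for $d^2=0$, and the vanishing of the Tor terms) that the paper leaves implicit.
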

\begin{proof} By observing that $(K_{\bullet},d)$ is exactly the tensor product complex of (3.1) and
(3.2), the lemma follows by the K\"unneth formula for complexes (see (3.6.3) in \cite{w}).
\end{proof}

\subsection{A chain map.} Let $B_{\bullet}\to \mathbb{Z}$ be the bar resolution of
the trivial $\mathbb{Z}(\mathbb{Z}_{m}\times \mathbb{Z}_{n})$-module
$\mathbb{Z}$ (see \cite[Section 6.5]{w} and notations therein). Thus
up to homotopy we have a unique chain map
$F_{\bullet}:B_{\bullet}\To K_{\bullet}$. For our purpose,
we write $F_{1}, F_{2}$ and $F_{3}$ out. Explicitly, for $0\leq
i,s,k< m, \ 0\leq j,t,l< n$,
\begin{gather}
F_{1}: [g_{1}^{i}g_{2}^{j}]\mapsto
\sum_{\alpha=0}^{i-1}g_{1}^{\alpha}\Psi(1,0)+
\sum_{\beta=0}^{j-1}g_{1}^{i}g_{2}^{\beta}\Psi(0,1);\\
F_{2}: [g_{1}^{i}g_{2}^{j},g_{1}^{s}g_{2}^{t}]\mapsto
[\frac{i+s}{m}]\Psi(2,0)-
\sum_{\alpha=0}^{s-1}\sum_{\beta=0}^{j-1}g_{1}^{\alpha+i}g_{2}^{\beta}\Psi(1,1)\\
+[\frac{j+t}{n}]g_{1}^{i+s}\Psi(0,2);\notag\\
F_{3} :
[g_{1}^{i}g_{2}^{j},g_{1}^{s}g_{2}^{t},g_{1}^{k}g_{2}^{l}]\mapsto
\sum_{\alpha=0}^{i-1}[\frac{k+s}{m}]
g_{1}^{\alpha}\Psi(3,0)+\sum_{\beta=0}^{j-1}[\frac{k+s}{m}]g_{1}^{i}g_{2}^{\beta}\Psi(2,1)\\
+\sum_{\alpha=0}^{k-1}[\frac{j+t}{n}]g_{1}^{i+s+\alpha}\Psi(1,2)+
\sum_{\beta=0}^{j-1}[\frac{t+l}{n}]g_{1}^{i+s+k}g_{2}^{\beta}\Psi(0,3)\notag.
\end{gather}
Here, say, if $i=0$, we
understand $\sum_{\alpha=0}^{i-1}g_{1}^{i}$ as $0$.
\begin{lemma} The following diagram is commutative

\begin{figure}[hbt]
\begin{picture}(150,50)(50,-40)
\put(0,0){\makebox(0,0){$ \cdots$}}\put(10,0){\vector(1,0){20}}\put(40,0){\makebox(0,0){$B_{3}$}}
\put(50,0){\vector(1,0){20}}\put(80,0){\makebox(0,0){$B_{2}$}}
\put(90,0){\vector(1,0){20}}\put(120,0){\makebox(0,0){$B_{1}$}}
\put(130,0){\vector(1,0){20}}\put(160,0){\makebox(0,0){$B_{0}$}}
\put(170,0){\vector(1,0){20}}\put(200,0){\makebox(0,0){$\mathbb{Z}$}}
\put(210,0){\vector(1,0){20}}\put(240,0){\makebox(0,0){$0$}}

\put(0,-40){\makebox(0,0){$ \cdots$}}\put(10,-40){\vector(1,0){20}}\put(40,-40){\makebox(0,0){$K_{3}$}}
\put(50,-40){\vector(1,0){20}}\put(80,-40){\makebox(0,0){$K_{2}$}}
\put(90,-40){\vector(1,0){20}}\put(120,-40){\makebox(0,0){$K_{1}$}}
\put(130,-40){\vector(1,0){20}}\put(160,-40){\makebox(0,0){$K_{0}$}}
\put(170,-40){\vector(1,0){20}}\put(200,-40){\makebox(0,0){$\mathbb{Z}$}}
\put(210,-40){\vector(1,0){20}}\put(240,-40){\makebox(0,0){$0$}}

\put(40,-10){\vector(0,-1){20}}
\put(80,-10){\vector(0,-1){20}}
\put(120,-10){\vector(0,-1){20}}
\put(158,-10){\line(0,-1){20}}\put(160,-10){\line(0,-1){20}}
\put(198,-10){\line(0,-1){20}}\put(200,-10){\line(0,-1){20}}

\put(60,5){\makebox(0,0){$\partial_{3}$}}
\put(100,5){\makebox(0,0){$\partial_{2}$}}
\put(140,5){\makebox(0,0){$\partial_{1}$}}

\put(60,-35){\makebox(0,0){$d$}}
\put(100,-35){\makebox(0,0){$d$}}
\put(140,-35){\makebox(0,0){$d$}}

\put(50,-20){\makebox(0,0){$F_{3}$}}
\put(90,-20){\makebox(0,0){$F_{2}$}}
\put(130,-20){\makebox(0,0){$F_{1}$}}

\end{picture}
\end{figure}
\end{lemma}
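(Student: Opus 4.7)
The plan is to verify the three chain-map identities
\begin{equation*}
d F_1 = F_0 \partial_1, \qquad d F_2 = F_1 \partial_2, \qquad d F_3 = F_2 \partial_3
\end{equation*}
by direct computation on the bar generators $[g_1^ig_2^j]$, $[g_1^ig_2^j,g_1^sg_2^t]$, and $[g_1^ig_2^j,g_1^sg_2^t,g_1^kg_2^l]$, where $F_0$ is the canonical identification $B_0 = \mathbb{Z}(\mathbb{Z}_m\times\mathbb{Z}_n) = K_0$. Because the differential $d$ of $K_\bullet$ splits as $d_1+d_2$, and each $F_n$ is a sum indexed by the free generators $\Psi(p,q)$ with $p+q=n$, I will project both sides of each identity onto every generator of the target degree and verify the resulting equalities separately in $\mathbb{Z}(\mathbb{Z}_m\times\mathbb{Z}_n)$.

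The degree-one check is immediate: $dF_1([g_1^ig_2^j])$ telescopes to $(g_1^ig_2^j-1)\Psi(0,0)$, which matches $F_0\partial_1([g_1^ig_2^j])$. For degree two, the $\Psi(2,0)$-projection of $dF_2$ is fed by $d_1\Psi(2,0)=N_m\Psi(1,0)$, and on the right by the $\Psi(1,0)$-parts of $F_1([g_1^sg_2^t])$, $F_1([g_1^{(i+s)'}g_2^{(j+t)'}])$, and $F_1([g_1^ig_2^j])$; the match reduces to the group-ring identity
\begin{equation*}
\sum_{\alpha=0}^{i-1}g_1^\alpha + g_1^i\sum_{\alpha=0}^{s-1}g_1^\alpha - \sum_{\alpha=0}^{(i+s)'-1}g_1^\alpha = \Big[\frac{i+s}{m}\Big]N_m,
\end{equation*}
which is the group-ring incarnation of Lemma 2.1. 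The $\Psi(0,2)$-projection is symmetric in $\mathbb{Z}_n$. The mixed component $\Psi(1,1)$ is supplied by $d_1\Psi(1,1)=T_m\Psi(0,1)$ and $d_2\Psi(1,1)=-T_n\Psi(1,0)$, and the sign $-1$ coming from $d_2$ is exactly what is needed to balance the cross terms of $F_1\partial_2$.

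For degree three one splits $K_2$ over its three generators $\Psi(2,0),\Psi(1,1),\Psi(0,2)$. The $\Psi(2,0)$-projection of $dF_3$ collects $d_1\Psi(3,0)$ and $d_2\Psi(2,1)$ (weighted by $(-1)^2$); combined with the matching pieces of the right-hand side it reduces via the same Lemma 2.1 trick as at degree two, carried out exactly in the style of the proof of Lemma 2.2. The $\Psi(0,2)$-projection is symmetric. The main obstacle is the mixed $\Psi(1,1)$-projection: on the left it gathers $d_1\Psi(2,1)$ and $d_2\Psi(1,2)$ (with sign $-1$), while on the right it gathers the $\Psi(1,1)$-pieces of $F_2$ evaluated on each of the four bar triples appearing in $\partial_3$. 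Reducing the primed indices now requires invoking Lemma 2.1 both for an $m$-floor and for an $n$-floor, after which the remaining $g_1^\alpha$- and $g_2^\beta$-sums telescope as before. The verification is entirely mechanical; the difficulty is purely bookkeeping -- tracking the prefactors $g_1^{i+s}$, $g_1^{i+s+k}$, $g_2^{j+t}$ and the signs $(-1)^p$ coming from $d_2$ -- and no new ideas beyond Lemma 2.1 and the pattern established at degrees one and two are needed.
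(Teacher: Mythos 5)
Your proposal follows exactly the paper's proof: a direct verification of $dF_1=\partial_1$, $dF_2=F_1\partial_2$, $dF_3=F_2\partial_3$ on bar generators, comparing coefficients of the generators $\Psi(p,q)$ in the target degree and reducing primed exponents via Lemma 2.1 (equivalently, the group-ring identity $\sum_{\alpha=0}^{(i+s)'-1}g_1^{\alpha}=\sum_{\alpha=0}^{i+s-1}g_1^{\alpha}-[\frac{i+s}{m}]N_m$), with the sign in $d_2$ accounting for the mixed $\Psi(1,1)$ terms. The only thing separating your write-up from the paper's is that the lengthy bookkeeping for the $\Psi(1,1)$-component at degree three is asserted rather than displayed, but the ingredients you name are precisely the ones the paper uses to carry it out.
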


\begin{proof} We adopt the notations used in \cite{w}. Take the generator $[g_{1}^{i}g_{2}^{j}]\in B_{1}$ for some $0\leq i<m$ and
$0\leq j< n$, then \[\partial_{1}([g_{1}^{i}g_{2}^{j}])=(g_{1}^{i}g_{2}^{j}-1)\Psi(0,0).\] On the other hand, \begin{eqnarray*}
dF_{1}([g_{1}^{i}g_{2}^{j}])&=&d(\sum_{\alpha=0}^{i-1}g_{1}^{\alpha}\Psi(1,0)+
\sum_{\beta=0}^{j-1}g_{1}^{i}g_{2}^{\beta}\Psi(0,1))\\ &=& (\sum_{\alpha=0}^{i-1}g_{1}^{\alpha}(g_{1}-1)+
\sum_{\beta=0}^{j-1}g_{1}^{i}g_{2}^{\beta}(g_{2}-1))\Psi(0,0)\\ &=&(g_{1}^{i}g_{2}^{j}-1)\Psi(0,0). \end{eqnarray*} So we have $dF_{1}=\partial_{1}$.

For any natural number $i,$ we denote by $i'$ and $i''$ the remainders of division of $i$ by $m$
and $n$ respectively. Now for any generator $[g_{1}^{i}g_{2}^{j},g_{1}^{s}g_{2}^{t}]\in B_{2}$,
\begin{eqnarray*}
&&F_{1}\partial_{2}([g_{1}^{i}g_{2}^{j},g_{1}^{s}g_{2}^{t}])\\
&=&F_{1}(g_{1}^{i}g_{2}^{j}[g_{1}^{s}g_{2}^{t}]-[g_{1}^{i+s}g_{2}^{j+t}]+[g_{1}^{i}g_{2}^{j}])\\
&=&g_{1}^{i}g_{2}^{j}(\sum_{\alpha=0}^{s-1}g_{1}^{\alpha}\Psi(1,0)+
\sum_{\beta=0}^{t-1}g_{1}^{s}g_{2}^{\beta}\Psi(0,1))
-(\sum_{\alpha=0}^{(i+s)'-1}g_{1}^{\alpha}\Psi(1,0)\\&&+
\sum_{\beta=0}^{(j+t)''-1}g_{1}^{i+s}g_{2}^{\beta}\Psi(0,1))
+(\sum_{\alpha=0}^{i-1}g_{1}^{\alpha}\Psi(1,0)+
\sum_{\beta=0}^{j-1}g_{1}^{i}g_{2}^{\beta}\Psi(0,1))\\
&=&g_{1}^{i}g_{2}^{j}(\sum_{\alpha=0}^{s-1}g_{1}^{\alpha}\Psi(1,0)+
\sum_{\beta=0}^{t-1}g_{1}^{s}g_{2}^{\beta}\Psi(0,1))
-((\sum_{\alpha=0}^{(i+s)-1}g_{1}^{\alpha}-[\frac{i+s}{m}]N_{m})\Psi(1,0)\\&&+
(\sum_{\beta=0}^{(j+t)-1}g_{1}^{i+s}g_{2}^{\beta}-g_{1}^{i+s}[\frac{j+t}{n}]N_{n})\Psi(0,1))
+(\sum_{\alpha=0}^{i-1}g_{1}^{\alpha}\Psi(1,0)+
\sum_{\beta=0}^{j-1}g_{1}^{i}g_{2}^{\beta}\Psi(0,1))\\
&=&([\frac{i+s}{m}]N_{m}+g_{1}^{i}(g_{2}^{j}-1)\sum_{\alpha=0}^{s-1}g_{1}^{\alpha})\Psi(1,0)\\
&&+(g_{1}^{i+s}[\frac{j+t}{n}]N_{n}+g_{1}^{i}(1-g_{1}^{s})\sum_{\beta=1}^{j-1}g_{2}^{\beta})\Psi(0,1)
\end{eqnarray*}
and
\begin{eqnarray*}
&&dF_{2}([g_{1}^{i}g_{2}^{j},g_{1}^{s}g_{2}^{t}])\\
&=&d([\frac{i+s}{m}]\Psi(2,0)-
\sum_{\alpha=0}^{s-1}\sum_{\beta=0}^{j-1}g_{1}^{\alpha+i}g_{2}^{\beta}\Psi(1,1)
+[\frac{j+t}{n}]g_{1}^{i+s}\Psi(0,2))\\
&=& [\frac{i+s}{m}]N_{m}\Psi(1,0) -\sum_{\alpha=0}^{s-1}\sum_{\beta=0}^{j-1}g_{1}^{\alpha+i}(g_{1}-1)g_{2}^{\beta}\Psi(0,1)\\
&&+\sum_{\alpha=0}^{s-1}\sum_{\beta=0}^{j-1}g_{1}^{\alpha+i}g_{2}^{\beta}(g_{2}-1)\Psi(1,0)+g_{1}^{i+s}[\frac{j+t}{n}]N_{n}\Psi(0,1)\\
&=&([\frac{i+s}{m}]N_{m}+g_{1}^{i}(g_{2}^{j}-1)\sum_{\alpha=0}^{s-1}g_{1}^{\alpha})\Psi(1,0)\\
&&+(g_{1}^{i+s}[\frac{j+t}{n}]N_{n}+g_{1}^{i}(1-g_{1}^{s})\sum_{\beta=1}^{j-1}g_{2}^{\beta})\Psi(0,1).
\end{eqnarray*}
So, we have proved that $F_{1}\partial_{2}=dF_{2}$. At last,
\begin{eqnarray*}
&&F_{2}\partial_{3}([g_{1}^{i}g_{2}^{j},g_{1}^{s}g_{2}^{t},g_{1}^{k}g_{2}^{l}])\\
&=&F_{2}(g_{1}^{i}g_{2}^{j}[g_{1}^{s}g_{2}^{t},g_{1}^{k}g_{2}^{l}]-[g_{1}^{i+s}g_{2}^{j+t},g_{1}^{k}g_{2}^{l}]+[g_{1}^{i}g_{2}^{j},g_{1}^{s+k}g_{2}^{t+l}]
-[g_{1}^{i}g_{2}^{j},g_{1}^{s}g_{2}^{t}])\\
&=&g_{1}^{i}g_{2}^{j}([\frac{s+k}{m}]\Psi(2,0)-\sum_{\alpha=0}^{k-1}\sum_{\beta=0}^{t-1}g_{1}^{\alpha+s}g_{2}^{\beta}\Psi(1,1)+[\frac{t+l}{n}]g_{1}^{s+k}\Psi(0,2))\\
&-&([\frac{(i+s)'+k}{m}]\Psi(2,0)-\sum_{\alpha=0}^{k-1}\sum_{\beta=0}^{(j+t)''-1}g_{1}^{\alpha+i+s}g_{2}^{\beta}\Psi(1,1)+[\frac{(j+t)''+l}{n}]g_{1}^{i+s+k}\Psi(0,2))\\
&+&([\frac{i+(s+k)'}{m}]\Psi(2,0)-\sum_{\alpha=0}^{(s+k)'-1}\sum_{\beta=0}^{j-1}g_{1}^{\alpha+i}g_{2}^{\beta}\Psi(1,1)+[\frac{j+(t+l)''}{n}]g_{1}^{i+s+k}\Psi(0,2))\\
&-&([\frac{i+s}{m}]\Psi(2,0)-\sum_{\alpha=0}^{s-1}\sum_{\beta=0}^{j''-1}g_{1}^{\alpha+i}g_{2}^{\beta}\Psi(1,1)+[\frac{j+t}{n}]g_{1}^{i+s+k}\Psi(0,2)).
\end{eqnarray*}
In such expression, the item containing $\Psi(2,0)$ equals to
\begin{eqnarray*}
&&([\frac{s+k}{m}]g_{1}^{i}g_{2}^{j}-([\frac{i+s+k}{m}]-[\frac{i+s}{m}])+([\frac{i+s+k}{m}]-[\frac{s+k}{m}])-[\frac{i+s}{m}])\Psi(2,0)\\
&&=([\frac{s+k}{m}]g_{1}^{i}g_{2}^{j}-[\frac{s+k}{m}])\Psi(2,0)\\
&&=[\frac{s+k}{m}](g_{1}^{i}g_{2}^{j}-1)\Psi(2,0).
\end{eqnarray*}
The item containing $\Psi(1,1)$ equals to
\begin{eqnarray*}
&&(-\sum_{\alpha=0}^{k-1}\sum_{\beta=j}^{j+t-1}g_{1}^{\alpha+i+s}g_{2}^{\beta}+\sum_{\alpha=0}^{k-1}g_{1}^{\alpha+i+s}(\sum_{\beta=0}^{j+t-1}g_{1}^{\alpha+s}g_{2}^{\beta}-[\frac{j+t}{n}]N_{n})\\
&&-\sum_{\alpha=0}^{(s+k)-1}(g_{1}^{\alpha+i}-[\frac{s+k}{m}]g_{1}^{i}N_{m})\sum_{\beta=0}^{j-1}g_{2}^{\beta}+\sum_{\alpha=0}^{s-1}g_{1}^{\alpha+i}\sum_{\beta=0}^{j''-1}g_{2}^{\beta})\Psi(1,1)\\
&=&(\sum_{\alpha=0}^{k-1}g_{1}^{\alpha+i+s}\sum_{\beta=0}^{j-1}g_{2}^{\beta}-\sum_{\alpha=0}^{k-1}g_{1}^{\alpha+i+s}[\frac{j+t}{n}]N_{n}\\
&&+\sum_{\beta=0}^{j-1}g_{2}^{\beta}[\frac{s+k}{m}]g_{1}^{i}N_{m}-\sum_{\alpha=0}^{k-1}g_{1}^{\alpha+i+s}\sum_{\beta=0}^{j-1}g_{2}^{\beta})\Psi(1,1)\\
&=&(\sum_{\beta=0}^{j-1}g_{2}^{\beta}[\frac{s+k}{m}]g_{1}^{i}N_{m}-\sum_{\alpha=0}^{k-1}g_{1}^{\alpha+i+s}[\frac{j+t}{n}]N_{n})\Psi(1,1)
\end{eqnarray*}
The item containing $\Psi(0,2)$ equals to
\begin{eqnarray*}
&&([\frac{t+l}{n}]g_{1}^{i+s+k}g_{2}^{j}-([\frac{j+t+l}{n}]-[\frac{j+t}{n}])g_{1}^{i+s+k}\\
&&+([\frac{j+t+l}{n}]-[\frac{t+l}{n}])g_{1}^{i+s+k}-[\frac{j+t}{n}]g_{1}^{i+s})\Psi(0,2)\\
&=&([\frac{t+l}{n}]g_{1}^{i+s+k}(g_{2}^{j}-1)+[\frac{j+t}{n}](g_{1}^{i+s+k}-g_{1}^{i+s}))\Psi(0,2).
\end{eqnarray*}
In addition,
\begin{eqnarray*}
&&dF_{3}([g_{1}^{i}g_{2}^{j},g_{1}^{s}g_{2}^{t},g_{1}^{k}g_{2}^{l}])\\
&=&d(\sum_{\alpha=0}^{i-1}[\frac{k+s}{m}]g_{1}^{\alpha}\Psi(3,0)+\sum_{\beta=0}^{j-1}[\frac{k+s}{m}]g_{1}^{i}g_{2}^{\beta}\Psi(2,1)\\
&&+\sum_{\alpha=0}^{k-1}[\frac{j+t}{n}]g_{1}^{i+s+\alpha}\Psi(1,2)+
\sum_{\beta=0}^{j-1}[\frac{t+l}{n}]g_{1}^{i+s+k}g_{2}^{\beta}\Psi(0,3))\\
&=&\sum_{\alpha=0}^{i-1}[\frac{k+s}{m}]g_{1}^{\alpha}(g_{1}-1)\Psi(2,0)+\sum_{\beta=0}^{j-1}[\frac{k+s}{m}]g_{1}^{i}g_{2}^{\beta}(N_{m}\Psi(1,1)+(g_{2}-1)\Psi(2,0))\\
&&+\sum_{\alpha=0}^{k-1}[\frac{j+t}{n}]g_{1}^{i+s+\alpha}((g_{1}-1)\Psi(0,2)-N_{n}\Psi(1,1))+\sum_{\beta=0}^{j-1}[\frac{t+l}{n}]g_{1}^{i+s+k}g_{2}^{\beta}(g_{2}-1)\Psi(0,2)\\
&=&[\frac{k+s}{m}](g_{1}^{i}-1)\Psi(2,0)+[\frac{k+s}{m}]g_{1}^{i}(g_{2}^{j}-1)\Psi(2,0)\\
&&+[\frac{j+t}{n}](g_{1}^{i+s+k}-g_{1}^{i+s}\Psi(0,2)+[\frac{t+l}{n}]g_{1}^{i+s+k}(g_{2}^{j}-1)\Psi(0,2)\\
&&+\sum_{\beta=0}^{j-1}g_{2}^{\beta}[\frac{s+k}{m}]g_{1}^{i}N_{m}\Psi(1,1)-\sum_{\alpha=0}^{k-1}g_{1}^{\alpha+i+s}[\frac{j+t}{n}]N_{n}\Psi(1,1)\\
&=&[\frac{s+k}{m}](g_{1}^{i}g_{2}^{j}-1)\Psi(2,0)+(\sum_{\beta=0}^{j-1}g_{2}^{\beta}[\frac{s+k}{m}]g_{1}^{i}N_{m}-\sum_{\alpha=0}^{k-1}g_{1}^{\alpha+i+s}[\frac{j+t}{n}]N_{n})\Psi(1,1)\\
&&+([\frac{t+l}{n}]g_{1}^{i+s+k}(g_{2}^{j}-1)+[\frac{j+t}{n}](g_{1}^{i+s+k}-g_{1}^{i+s}))\Psi(0,2).
\end{eqnarray*}
By comparing the items containing $\Psi(2,0),\Psi(1,1)$ and $\Psi(0,2)$, we can find that $F_{2}\partial_{3}=dF_{3}$
\end{proof}

\subsection{2-cocycles and 2nd cohomology group.}
It is easy to see that a 2-cochain $f \in \Hom_{\mathbb{Z}(\mathbb{Z}_{m}\times \mathbb{Z}_{n})}(\bigoplus_{i+j=2} \mathbb{Z}(\mathbb{Z}_{m} \times \mathbb{Z}_{n}) \Psi(i,j),k^{\ast})$ is uniquely determined by the the sequence of  values
$(f(\Psi(2,0)),f(\Psi(1,1)),f(\Psi(0,2)))$.  For short,
let \[ A=f(\Psi(2,0)), \quad B=f(\Psi(1,1)), \quad \mathrm{and} \quad C=f(\Psi(0,2)).\]

\begin{lemma} The cochain $f$ is a $2$-cocycle if and only if $B^{m}=B^{n}=1$, and it is a $2$-coboundary if and only if
$B=1$.
\end{lemma}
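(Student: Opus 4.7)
The plan is to read off the cocycle and coboundary conditions directly from the differentials in the resolution $(K_\bullet, d)$ of Lemma 3.1. Since $k^\ast$ is a trivial $\mathbb{Z}(\mathbb{Z}_m\times\mathbb{Z}_n)$-module, any $\mathbb{Z}G$-module map $f$ out of a free generator $\Psi$ satisfies $f(g\Psi)=f(\Psi)$ for all $g\in G$. Consequently, two elementary identities will carry the whole argument: for any generator $\Psi$ of $K_\bullet$,
\begin{equation*}
f\bigl((g_i-1)\Psi\bigr) = 1, \qquad f\bigl(N_m\Psi\bigr)=f(\Psi)^{m}, \qquad f\bigl(N_n\Psi\bigr)=f(\Psi)^{n}.
\end{equation*}

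For the cocycle condition I would compute $d$ on each of the four free generators $\Psi(3,0),\Psi(2,1),\Psi(1,2),\Psi(0,3)$ of $K_3$ using the formulas for $d_1,d_2$, and then apply $f$. The two extreme generators give $d(\Psi(3,0))=(g_1-1)\Psi(2,0)$ and $d(\Psi(0,3))=(g_2-1)\Psi(0,2)$, and by the first identity above their images under $f$ are automatically $1$. The two mixed generators give
\begin{equation*}
d(\Psi(2,1)) = N_m\Psi(1,1)+(g_2-1)\Psi(2,0), \qquad d(\Psi(1,2))=(g_1-1)\Psi(0,2)-N_n\Psi(1,1),
\end{equation*}
so that $f\circ d$ evaluates to $B^{m}$ and $B^{-n}$ respectively. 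Hence the cocycle condition collapses to $B^m=B^n=1$.

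For the coboundary characterization I would pick an arbitrary $1$-cochain $h$ with $P:=h(\Psi(1,0))$ and $Q:=h(\Psi(0,1))$, and compute $dh$ on the three generators of $K_2$ using $d(\Psi(2,0))=N_m\Psi(1,0)$, $d(\Psi(1,1))=(g_1-1)\Psi(0,1)-(g_2-1)\Psi(1,0)$, and $d(\Psi(0,2))=N_n\Psi(0,1)$. The two identities above then give $dh=(P^{m},\,1,\,Q^{n})$. Therefore every $2$-coboundary has middle component equal to $1$, and conversely, given $(A,1,C)$ the existence of $P,Q\in k^\ast$ with $P^m=A$ and $Q^n=C$ follows from $k$ being algebraically closed.

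There is no serious obstacle here; the only risk is a bookkeeping error in the signs and the norm factors inside $d$, so the care I would take is to write out each $d(\Psi(i,j))$ explicitly once, verify it against the definition of $d_1+d_2$, and then mechanically apply $f$ and $h$ using the two identities displayed above.
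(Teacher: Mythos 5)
Your proposal is correct and follows essentially the same route as the paper: evaluate the cochain on $d(\Psi(3,0)),d(\Psi(2,1)),d(\Psi(1,2)),d(\Psi(0,3))$ for the cocycle condition and on $d(\Psi(2,0)),d(\Psi(1,1)),d(\Psi(0,2))$ for the coboundary condition, using that $(g_i-1)$ acts trivially and the norms give $m$-th and $n$-th powers, with algebraic closedness of $k$ supplying the converse for coboundaries. All your differential computations agree with the definition of $d_1+d_2$, so nothing further is needed.
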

\begin{proof} By definition, $f$ is a cocycle if and only if $d^{\ast}(f)=0$, which is equivalent to saying that
\begin{gather*}1=d^{\ast}(f)(\Psi(3,0))=f(d(\Psi(3,0)))=f(T_{m}\Psi(2,0))=A^{0},\\
1=d^{\ast}(f)(\Psi(2,1))=f(d(\Psi(2,1)))=f(N_{m}\Psi(1,1)+T_{n}\Psi(2,0))=B^{m}A^{0},\\
1=d^{\ast}(f)(\Psi(1,2))=f(d(\Psi(1,2)))=f(T_{m}\Psi(0,2)-N_{n}\Psi(1,1))=C^{0}B^{-n},\\
1=d^{\ast}(f)(\Psi(0,3))=f(d(\Psi(0,3)))=f(T_{n}\Psi(0,2))=C^{0}.
\end{gather*}
So the first part of the lemma is proved. For the second part, assume $f=d^{\ast}(g)$ for a 1-cochain $g$. Therefore,
\begin{gather*}A=d^{\ast}(g)(\Psi(2,0))=g(d(\Psi(2,0)))=g(N_{m}\Psi(1,0))=(g(\Psi(1,0)))^{m},\\
B=d^{\ast}(g)(\Psi(1,1))=g(d(\Psi(2,1)))=g(T_{m}\Psi(0,1)-T_{n}\Psi(1,0))\\
=(g(\Psi(0,1)))^{0}(g(\Psi(1,0)))^{0}=1,\\
C=d^{\ast}(g)(\Psi(0,2))=g(d(\Psi(0,2)))=g(N_{n}\Psi(0,1))=(g(\Psi(0,1)))^{n}.
\end{gather*}
Since the field is algebraically closed, the only restriction is  $B=1$.
\end{proof}
Let $(m,n)$ denote the greatest common divisor of $m$ and $n$. The preceding lemma implies the following result directly.
\begin{corollary} $\H^{2}(\mathbb{Z}_{m}\times
\mathbb{Z}_{n}, k^{\ast})\cong \mathbb{Z}_{(m,n)}$.
\end{corollary}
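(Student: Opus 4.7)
The plan is to deduce the corollary directly from Lemma 3.3, since that lemma already pins down both the cocycle and coboundary conditions in terms of the triple $(A,B,C) = (f(\Psi(2,0)), f(\Psi(1,1)), f(\Psi(0,2))) \in (k^{\ast})^{3}$ associated to a 2-cochain $f$.

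First, I would read off from Lemma 3.3 that the group of 2-cocycles is
\[
Z^{2} \;=\; \{(A,B,C) \in (k^{\ast})^{3} \,:\, B^{m}=B^{n}=1\} \;\cong\; k^{\ast} \times (\mu_{m}\cap\mu_{n}) \times k^{\ast},
\]
where $\mu_{d}$ denotes the group of $d$-th roots of unity in $k$. Similarly, Lemma 3.3 tells us that the subgroup of 2-coboundaries satisfies $B=1$. For the reverse inclusion needed to identify $B^{2}$ precisely, I would observe that given any $A, C \in k^{\ast}$, the algebraic closedness of $k$ guarantees the existence of an $m$-th root of $A$ and an $n$-th root of $C$; using these as the values of a 1-cochain $g$ on $\Psi(1,0)$ and $\Psi(0,1)$ realizes $(A,1,C)$ as $d^{\ast}(g)$. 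Hence
\[
B^{2} \;=\; \{(A,B,C) \in (k^{\ast})^{3} \,:\, B=1\} \;\cong\; k^{\ast} \times k^{\ast}.
\]

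Taking the quotient, the $A$ and $C$ coordinates cancel, leaving $\H^{2}(\mathbb{Z}_{m}\times\mathbb{Z}_{n}, k^{\ast}) \cong \mu_{m}\cap\mu_{n}$. The final step is the elementary identity $\mu_{m}\cap\mu_{n} = \mu_{(m,n)}$: if $B^{m}=B^{n}=1$ then the order of $B$ divides $\gcd(m,n)$, and conversely every $(m,n)$-th root of unity is both an $m$-th and an $n$-th root of unity. Since $\mu_{(m,n)}$ is cyclic of order $(m,n)$, this yields the claimed isomorphism.

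There is no real obstacle here: the corollary is essentially a bookkeeping consequence of Lemma 3.3. The only point requiring minor care is ensuring that coboundaries fill out the entire subgroup $\{B=1\}$, which is where the algebraic closedness of $k$ enters in an essential way.
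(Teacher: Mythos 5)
Your argument is correct and is exactly the reasoning the paper intends: the paper omits the proof, stating only that the corollary follows directly from the preceding lemma, and your computation of $Z^{2}$, $B^{2}$, and the quotient $\mu_{m}\cap\mu_{n}=\mu_{(m,n)}\cong\mathbb{Z}_{(m,n)}$ is precisely that bookkeeping, including the correct use of algebraic closedness to show the coboundaries fill out all of $\{B=1\}$.
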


\begin{remark} For any natural number $s$, set $\zeta_{s}$ to be a primitive $s$-th root of unity. Thus the above
corollary tells us that $$\{f \;\mathrm{is\; a}\; 2\mathrm{-cochain}|f(\Psi(2,0))=1,f(\Psi(1,1))=\zeta_{(m,n)}^{b}, f(\Psi(0,2))=1,\; \mathrm{for}\;0
\leq b<(m,n)\}$$ is a complete set of representatives of
$2$-cocycles.
\end{remark}

By combining Lemmas 3.2 and 3.3, any $2$-cochain $\Phi$ in $ \Hom_{\Z(\Z_{m} \times \Z_{n})}({B}_{2},k^{\ast})$ can be described as
\begin{equation}\Phi(g_{1}^{i}g_{2}^{j},g_{1}^{s}g_{2}^{t})=A^{[\frac{i+s}{m}]}
B^{-js}C^{[\frac{j+t}{n}]}
\end{equation}
and it is a $2$-cocycle if and only if $B^{(m,n)}=1$.
Define $\Phi_{b}\in \Hom_{\Z(\Z_{m}\times
\Z_{n})}({B}_{2},k^{\ast})$ by
\begin{equation}
\Phi_{b}(g_{1}^{i}g_{2}^{j},g_{1}^{s}g_{2}^{t})
:=\zeta_{(m,n)}^{bjs}.\end{equation}

Owing to Remark 3.5, one has
\begin{proposition} The set $\{\Phi_{b}|0\leq b<(m,n)\}$ is a complete set of representatives of the normalized $2$-cocycles.
\end{proposition}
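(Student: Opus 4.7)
The plan is to pull back the canonical cocycle representatives from $K_\bullet$ to $B_\bullet$ along the chain map of Lemma 3.2. Since both $K_\bullet$ and $B_\bullet$ are projective resolutions of the trivial $\Z(\Z_m\times\Z_n)$-module $\Z$, the chain map $F_\bullet$ is a quasi-isomorphism, so the induced map $F^\ast$ yields an isomorphism $\H^2(\Hom_{\Z G}(K_\bullet,k^\ast))\cong \H^2(\Z_m\times\Z_n,k^\ast)$. In particular, $F_2^\ast$ carries a complete set of representatives of $2$-cocycles on the $K$-side to a complete set on the $B$-side.

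Concretely, I would start from the representatives identified in Remark 3.5, namely the cochains $f_b\in\Hom_{\Z G}(K_2,k^\ast)$ with $f_b(\Psi(2,0))=1$, $f_b(\Psi(1,1))=\zeta_{(m,n)}^{-b}$, $f_b(\Psi(0,2))=1$ for $0\le b<(m,n)$ (the sign $-b$ is a cosmetic reindexing chosen to match the final formula), and substitute $A=1$, $B=\zeta_{(m,n)}^{-b}$, $C=1$ into the pullback formula (3.6). This yields
$$(F_2^\ast f_b)(g_1^i g_2^j,\, g_1^s g_2^t) \;=\; 1^{[\frac{i+s}{m}]}\cdot\zeta_{(m,n)}^{b j s}\cdot 1^{[\frac{j+t}{n}]} \;=\; \zeta_{(m,n)}^{b j s},$$
which is exactly $\Phi_b$ as defined in (3.7). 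Because $F_2^\ast$ carries cocycles to cocycles (thanks to $F_2\partial_3=dF_3$ from Lemma 3.2) and induces an isomorphism on cohomology, the family $\{\Phi_b\}_{0\le b<(m,n)}$ is automatically a complete set of cohomology-class representatives.

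Normalization is immediate from the explicit formula: $\Phi_b(1,g_1^s g_2^t)=1$ and $\Phi_b(g_1^i g_2^j,1)=1$, since in each case one of $j,s$ vanishes in the exponent $bjs$. There is no substantive obstacle here: the real work has already been packaged in the construction of $F_\bullet$ (Lemma 3.2), in the pullback formula (3.6), and in the determination of cocycle representatives on the $K$-side (Lemma 3.3 together with Remark 3.5). The only bookkeeping step is fixing the sign in the exponent of $\zeta_{(m,n)}$, which is handled by the trivial reindexing $b\mapsto -b\pmod{(m,n)}$.
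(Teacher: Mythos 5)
Your proposal is correct and follows essentially the same route as the paper: the paper derives formula (3.6) by pulling back along $F_2$ (Lemmas 3.2 and 3.3), identifies the representatives on the $K_\bullet$-side in Remark 3.5, and then states Proposition 3.6 as an immediate consequence, with the same harmless reindexing $b\mapsto -b \pmod{(m,n)}$ absorbing the sign in $B^{-js}$. Your explicit remarks about $F^\ast$ being a cohomology isomorphism and about normalization are fine and merely spell out what the paper leaves implicit.
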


\subsection{3-cocycles and 3rd cohomology group.}
In the same manner as Subsection 3.3, any 3-cochain $f\in \Hom_{\mathbb{Z}(\mathbb{Z}_{m}\times
\mathbb{Z}_{n})}(\bigoplus_{i+j=3} \mathbb{Z}(\mathbb{Z}_{m}\times
\mathbb{Z}_{n})\Psi(i,j),k^{\ast})$ is uniquely
determined by its values on $\Psi(3,0),\Psi(2,1),\Psi(1,2)$ and $\Psi(0,3)$. By abuse of notation, let $A=f(\Psi(3,0)), B=f(\Psi(2,1)), C=f(\Psi(1,2))$ and $D=f(\Psi(0,3))$. Due to the similarities, we state the following results without proofs.

\begin{lemma} The map $f$ is a $3$-cocycle if and only if
$A^{m}=B^{n}C^{m}=D^{n}=1$, and it is a $3$-coboundary if and only if
$A=D=1, B=E^{m}$ and $C=E^{-n}$ for some $E\in k^{\ast}$.
\end{lemma}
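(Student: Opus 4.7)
The proof is a direct verification using the differentials of $(K_\bullet, d)$, entirely parallel to the proof of Lemma 3.3. The key mechanical point is that $f$ lands in $k^\ast$ with trivial $G$-action, so for any free generator $x$ one has $f(T_m x) = f((g_1-1)x) = 1$ and $f(N_m x) = f(x)^m$, and similarly for $T_n, N_n$. All verifications reduce to enumerating the free generators of $K_4$ and $K_3$ and plugging into these rules.

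For the cocycle direction I would list the five free generators $\Psi(4,0), \Psi(3,1), \Psi(2,2), \Psi(1,3), \Psi(0,4)$ of $K_4$ and compute $d = d_1 + d_2$ on each. The three even-total-degree generators yield
\[ d\Psi(4,0) = N_m \Psi(3,0), \quad d\Psi(2,2) = N_m \Psi(1,2) + N_n \Psi(2,1), \quad d\Psi(0,4) = N_n \Psi(0,3), \]
so applying $f$ gives exactly the three relations $A^m = 1$, $B^n C^m = 1$, $D^n = 1$. The two mixed generators produce $d\Psi(3,1) = T_m \Psi(2,1) - T_n \Psi(3,0)$ and $d\Psi(1,3) = T_m \Psi(0,3) - T_n \Psi(1,2)$, which evaluate to $1$ under $f$ automatically and impose no extra condition. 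This gives the cocycle characterisation $A^m = B^n C^m = D^n = 1$.

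For the coboundary direction I would take a 2-cochain $g$ with values $\alpha = g(\Psi(2,0))$, $\beta = g(\Psi(1,1))$, $\gamma = g(\Psi(0,2))$ and evaluate $d^\ast(g)$ on the four generators of $K_3$ using
\[ d\Psi(3,0) = T_m \Psi(2,0), \quad d\Psi(2,1) = N_m \Psi(1,1) + T_n \Psi(2,0), \]
\[ d\Psi(1,2) = T_m \Psi(0,2) - N_n \Psi(1,1), \quad d\Psi(0,3) = T_n \Psi(0,2). \]
The $T$-terms vanish by $G$-invariance, leaving $A = 1$, $B = \beta^m$, $C = \beta^{-n}$, $D = 1$. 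Setting $E = \beta$ yields the stated form. Conversely, given data of the prescribed shape, one recovers a coboundary preimage by taking $\beta = E$ and $\alpha, \gamma$ arbitrary (e.g.\ $1$); this is available because $k$ is algebraically closed, so every value in $k^\ast$ is both an $m$-th and an $n$-th power.

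No serious obstacle is expected. The one point requiring care is the sign $(-1)^i$ in $d_2$: it is precisely this sign that makes the $T_m$ and $T_n$ contributions in the mixed generators $\Psi(3,1), \Psi(1,3)$ evaluate to $1$ individually (rather than combining into spurious constraints tying $A$ to $B$ or $C$ to $D$), and similarly guarantees that the two $T$-terms in $d\Psi(2,1)$ and $d\Psi(1,2)$ vanish independently under any 2-cochain.
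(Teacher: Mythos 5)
Your proof is correct and is exactly the computation the paper intends: the paper omits the proof of this lemma as being ``similar'' to that of Lemma~3.3, and your enumeration of the generators of $K_4$ and $K_3$, together with the rules $f(T_m x)=f(T_n x)=1$ and $f(N_m x)=f(x)^m$, $f(N_n x)=f(x)^n$ for trivial coefficients, reproduces precisely the analogous verification and yields the stated conditions. One small quibble with your closing remark: the $T$-terms evaluate to $1$ because the $G$-action on $k^{\ast}$ is trivial, irrespective of the sign $(-1)^i$ in $d_2$; where that sign genuinely enters is in the $N$-terms of the mixed generators, e.g.\ producing $C=\beta^{-n}$ from $-N_n\Psi(1,1)$ in $d\Psi(1,2)$ and the relation $B^nC^m=1$ from $d\Psi(2,2)$.
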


\begin{corollary} $\H^{3}(\mathbb{Z}_{m}\times
\mathbb{Z}_{n}, k^{\ast})\cong \mathbb{Z}_{m}\times
\mathbb{Z}_{(m,n)}\times \mathbb{Z}_{n}$.
\end{corollary}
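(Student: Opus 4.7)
The plan is to invoke Lemma 3.8 to identify the groups of 3-cocycles and 3-coboundaries as explicit subgroups of $(k^{\ast})^{4}$ (indexed by the values $A,B,C,D$ of a cochain on $\Psi(3,0),\Psi(2,1),\Psi(1,2),\Psi(0,3)$), and then read off the quotient as a direct product of three independent subquotients.

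By Lemma 3.8 the group of 3-cocycles is
\[
\{(A,B,C,D) \in (k^{\ast})^{4} \mid A^{m} = D^{n} = 1,\ B^{n}C^{m} = 1\},
\]
and the 3-coboundaries form the subgroup $\{(1, E^{m}, E^{-n}, 1) \mid E \in k^{\ast}\}$. Because this coboundary subgroup is trivial in the $A$- and $D$-coordinates, $\H^{3}$ splits as a direct product of the subquotient in the $A$-slot, the subquotient in the $(B,C)$-slot, and the subquotient in the $D$-slot. The $A$-slot is just the group of $m$-th roots of unity in $k^{\ast}$ modulo $\{1\}$, and the $D$-slot is the group of $n$-th roots of unity modulo $\{1\}$; since $k$ is algebraically closed of characteristic zero these are cyclic of orders $m$ and $n$, accounting for the outer factors $\mathbb{Z}_{m}$ and $\mathbb{Z}_{n}$.

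The remaining task is to identify the middle quotient $S/T$, where $S = \{(B,C) \in (k^{\ast})^{2} \mid B^{n}C^{m} = 1\}$ and $T = \{(E^{m}, E^{-n}) \mid E \in k^{\ast}\}$, as cyclic of order $(m,n)$. The key tool is the divisibility of $k^{\ast}$: for any $(B,C) \in S$, pick $E \in k^{\ast}$ with $E^{m} = B$; then $(B,C)(E^{m},E^{-n})^{-1} = (1, C E^{n})$, so every class in $S/T$ admits a representative of the form $(1, C')$ with $(C')^{m} = 1$. Two such normalized representatives $(1, C_{1})$ and $(1, C_{2})$ are $T$-equivalent iff $C_{1}C_{2}^{-1} = F^{-n}$ for some $F$ with $F^{m} = 1$. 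Identifying the group of $m$-th roots of unity with $\mathbb{Z}/m\mathbb{Z}$, this relation is the image of multiplication by $-n$, which is the cyclic subgroup of order $m/(m,n)$; hence the quotient has order $(m,n)$ and $S/T \cong \mathbb{Z}_{(m,n)}$.

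Essentially all the heavy lifting was carried out in the construction of the chain map $F_{\bullet}$ and in Lemma 3.8; the only genuinely delicate step is the computation of $S/T$, where the coupling $B^{n}C^{m} = 1$ together with the coboundary relation is precisely what produces the common-divisor factor. This is also what explains the asymmetry between the middle factor $\mathbb{Z}_{(m,n)}$ and the outer factors $\mathbb{Z}_{m},\mathbb{Z}_{n}$ in the answer.
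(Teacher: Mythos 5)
Your proof is correct and follows the paper's intended route: the paper states this corollary without proof as a direct consequence of the lemma characterizing $3$-cocycles ($A^{m}=B^{n}C^{m}=D^{n}=1$) and $3$-coboundaries ($A=D=1$, $B=E^{m}$, $C=E^{-n}$), exactly as you do, and your computation of the middle quotient $S/T\cong\mathbb{Z}_{(m,n)}$ correctly fills in the details the paper omits. (Minor point: the lemma you invoke is Lemma 3.7 in the paper's numbering, not 3.8.)
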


Using the third term $F_{3}$ of the chain map, any 3-cochain $\Phi$ in $ \Hom_{\Z(\Z_{m}\times \Z_{n})}({B}_{3},k^{\ast})$ can be described as
\begin{equation}\Phi(g_{1}^{i}g_{2}^{j},g_{1}^{s}g_{2}^{t},g_{1}^{k}g_{2}^{l})=A^{[\frac{k+s}{m}]i}
B^{[\frac{k+s}{m}]j}C^{[\frac{j+t}{n}]k}D^{[\frac{t+l}{n}]j}
\end{equation}
and it is a 3-cocycle if and only if $A^{m}=B^{n}C^{m}=D^{n}=1$.
Define a 3-cocycle $\Phi_{a,b,d}\in \Hom_{\Z(\Z_{m}\times \Z_{n})}({B}_{3},k^{\ast})$ by setting
\begin{equation}
\Phi_{a,b,d}(g_{1}^{i}g_{2}^{j},g_{1}^{s}g_{2}^{t},g_{1}^{k}g_{2}^{l})
=\zeta_{m}^{a[\frac{k+s}{m}]i}
\zeta_{n}^{b[\frac{k+s}{m}]j}\zeta_{n}^{d[\frac{t+l}{n}]j}.\end{equation}
\begin{proposition} The set $\{\Phi_{a,b,d}|0\leq a<m, 0\leq b<(m,n),0\leq d<n\}$ is a complete set of representatives of the normalized $3$-cocycles.
\end{proposition}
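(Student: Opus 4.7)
The plan is to work on $K_\bullet$, where Lemma 3.8 gives a transparent description of both 3-cocycles and 3-coboundaries in terms of the four parameters $(A,B,C,D)$; formula (3.8) then transports everything back to $B_\bullet$ via the chain map $F_3$. Because $F$ is a morphism of free resolutions, it induces an isomorphism on cohomology, so every cohomology class of normalized 3-cocycles on $B_\bullet$ is represented by one of the form (3.8). It therefore suffices to show that each cohomology class on the $K$-side has a unique representative with $(A,B,C,D) = (\zeta_m^a, \zeta_n^b, 1, \zeta_n^d)$, where $0 \leq a < m$, $0 \leq b < (m,n)$, and $0 \leq d < n$.

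For existence I would exploit the coboundary freedom to kill the $C$-component. Given arbitrary cocycle data, so $A^m = B^n C^m = D^n = 1$, the algebraic closedness of $k$ yields $E \in k^{\ast}$ with $E^n = C$. By Lemma 3.8, subtracting the coboundary with parameter $E$ replaces $(A,B,C,D)$ by $(A, BE^m, 1, D)$, and the identity $(BE^m)^n = B^n E^{mn} = C^{-m} E^{mn} = 1$ follows automatically from the cocycle relation. Hence $A = \zeta_m^a$, $BE^m = \zeta_n^b$, $D = \zeta_n^d$ for unique $a \in [0,m)$, $d \in [0,n)$, and some $b \in [0,n)$, and pulling back via (3.8) yields (3.9). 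Normalization of $\Phi_{a,b,d}$ is a quick check from (3.9): each of the three exponents carries an explicit factor of $i$ or $j$, so the first argument being the identity gives $\Phi_{a,b,d} = 1$; when either of the other two arguments is the identity, the ranges $0 \leq k, s < m$ and $0 \leq t, l < n$ force the bracket factors $[\frac{k+s}{m}]$ and $[\frac{t+l}{n}]$ to vanish.

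For uniqueness, suppose $\Phi_{a,b,d}$ and $\Phi_{a',b',d'}$ are cohomologous; their ratio has $K$-side data $(\zeta_m^{a-a'}, \zeta_n^{b-b'}, 1, \zeta_n^{d-d'})$, which must match the coboundary shape $(1, E^m, E^{-n}, 1)$ of Lemma 3.8 for some $E$. This forces $a \equiv a' \pmod m$, $d \equiv d' \pmod n$, $E^n = 1$, and $\zeta_n^{b-b'} = E^m \in \langle \zeta_n^m \rangle$. The additive subgroup $\langle m \rangle \leq \mathbb{Z}_n$ equals $(m,n)\mathbb{Z}/n\mathbb{Z}$, so the last condition is precisely $b \equiv b' \pmod{(m,n)}$, completing uniqueness. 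The only genuinely nontrivial point is this identification of $\langle m \rangle$ inside $\mathbb{Z}_n$ with the multiples of $(m,n)$; everything else is bookkeeping through Lemma 3.8, and the resulting count $m \cdot (m,n) \cdot n$ matches $|H^3(\mathbb{Z}_m \times \mathbb{Z}_n, k^{\ast})|$ from Corollary 3.9.
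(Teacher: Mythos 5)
Your proposal is correct and follows exactly the route the paper intends (but leaves implicit, in parallel with the $2$-cocycle case of Remark 3.5 and Proposition 3.6): use Lemma 3.8 to describe cocycles and coboundaries on the $K$-resolution by the data $(A,B,C,D)$, kill $C$ by a coboundary, observe that the remaining coboundary freedom multiplies $B$ by $\zeta_n^{(m,n)\mathbb{Z}}$, and transport back through $F_3$ via formula (3.8). The verification of normalization and the identification of $\langle \zeta_n^m\rangle$ with $\langle \zeta_n^{(m,n)}\rangle$ are both handled correctly, so nothing is missing.
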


\subsection{Quasi-bicharacters.}
\begin{definition} Let $G$ be a group and $\Phi$ a normalized $3$-cocycle on $G$ with coefficients in $k^{\ast}$. A
map $\R: G \times G \To k^{\ast}$ is called a \emph{quasi-bicharacter with respect to $\Phi$}  provided the following equations are satisfied:
\begin{gather}
\R(xy, z)=\Phi(z , x , y)\R(x , z)\Phi^{-1}(x , z ,
y) \\ \ \ \times \R(y ,z)\Phi(x , y , z), \nonumber \\
\R(x , yz)=\Phi^{-1}(y , z , x)\R(x , z) \Phi(y , x ,
z) \\ \ \ \ \ \ \times \R(x
, y)\Phi^{-1}(x , y , z), \nonumber
\end{gather} for all $x,y,z \in G.$
\end{definition}

Clearly, if $\Phi$ is trivial, then a quasi-bicharacter is just an ordinary bicharacter. For the convenience of our later computations, we rewrite equations (3.10) and (3.11) in the following way:
\begin{gather}
\R(xy, z)=\R(x , z)\R(y ,
z)\frac{\Phi(z,x,y)\Phi(x,y,z)}{\Phi(x,z,y)},\\
\R(x , yz)=\R(x , y)\R(x , z)\frac{\Phi(y,x,z)}{\Phi(y,z,x)\Phi(x,y,z)},
\end{gather} for all $x,y,z \in G.$ The aim of this subsection is to describe all quasi-bicharacters of $\Z_{m}\times
\Z_{n}$. By Proposition 3.9, one can assume that $\Phi=\Phi_{a,b,d}$ for some $a\in \{0,\ldots,m-1\},
b\in \{0,\ldots,(m,n)-1\}$ and $d\in \{0,\ldots,n-1\}$. Clearly, any quasi-bicharacter $\R$ is uniquely determined by the following
four values:
$$r_{11}:=\R(g_{1},g_{1}),\;\;r_{12}:=\R(g_{1},g_{2}),\;\;r_{21}:=\R(g_{2},g_{1}),\;\;r_{22}:=\R(g_{2},g_{2}).$$

\begin{proposition} Let $G=\Z_m \times \Z_n, \ \Phi=\Phi_{a,b,d}$ and $r_{11},r_{12},r_{21},r_{22} \in k^*.$ Then there is
a quasi-bicharacter $\R$ with respect to $\Phi$ satisfying $\R(g_{1},g_{1})=r_{11},\R(g_{1},g_{2})=r_{12},\R(g_{2},g_{1})=r_{21},\R(g_{2},g_{2})=r_{22}$
if and only if the following equations are satisfied:
\begin{gather}
r_{11}^{m}=\zeta_{m}^{a}=\zeta_{m}^{-a},\;\; r_{22}^{n}=\zeta_{n}^{d}=\zeta_{n}^{-d}, \nonumber\\
r_{12}^{n}=1,\;\;r_{12}^{m}=\zeta_{n}^{-b},  \\
r_{21}^{n}=1,\;\;r_{21}^{m}=\zeta_{n}^{b}. \nonumber
\end{gather}
\end{proposition}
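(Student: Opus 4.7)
The plan is to treat the defining identities (3.12) and (3.13) as recursion relations that determine $\R$ on all of $G\times G$ from the four generator values $r_{11},r_{12},r_{21},r_{22}$. Setting $x=y=e$ in (3.12) and invoking the normalization of $\Phi_{a,b,d}$ forces $\R(e,z)=1$ for every $z$, and the dual equation (3.13) gives $\R(z,e)=1$; so the only extra conditions needed on the $r_{ij}$ are $\R(g_\alpha^{m_\alpha},g_\beta)=1=\R(g_\beta,g_\alpha^{m_\alpha})$ for $\alpha,\beta\in\{1,2\}$, where $m_1=m$ and $m_2=n$.

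For necessity, I iterate (3.12) with $x=g_\alpha^{i}$, $y=g_\alpha$, $z=g_\beta$ and iterate (3.13) with $x=g_\beta$, $y=g_\alpha^{j}$, $z=g_\alpha$. The cocycle ratios appearing in the recursions simplify drastically on generators because the brackets $[(k+s)/m]$ in the formula (3.9) vanish for $0\le k,s<m$ unless $k+s\ge m$ (and similarly for $[(t+l)/n]$). For example, with $\alpha=\beta=1$ the first recursion collapses to $\R(g_1^{i+1},g_1)=\R(g_1^{i},g_1)\,r_{11}\,\zeta_m^{a\,[(1+i)/m]}$, which iterated $m$ times yields $\R(g_1^m,g_1)=r_{11}^{m}\zeta_m^{a}$, forcing $r_{11}^{m}=\zeta_m^{-a}$; the dual recursion similarly gives $\R(g_1,g_1^m)=r_{11}^{m}\zeta_m^{-a}$, forcing $r_{11}^{m}=\zeta_m^{a}$. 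Together these produce the combined statement $r_{11}^{m}=\zeta_m^{a}=\zeta_m^{-a}$ from (3.14), and the remaining three pairs of identities arise in the same way from the other three generator pairs.

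For sufficiency, I assume (3.14) and use the same recursions to define $\R(g_1^ig_2^j,g_1^kg_2^l)$ by induction on $i+j+k+l$, obtaining a closed-form monomial in $r_{11},r_{12},r_{21},r_{22},\zeta_m,\zeta_n$. The conditions in (3.14) are precisely what is needed to make the exponents well defined modulo $m$ and $n$ as any index crosses a multiple of its period, so the formula extends unambiguously to all of $G\times G$. Once well definedness is secured, the two quasi-bicharacter axioms (3.10)--(3.11) are verified for arbitrary triples by a direct monomial-exponent comparison against the explicit cocycle (3.9).

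The main obstacle will be this last verification: the cocycle $\Phi_{a,b,d}$ has three independent cyclotomic factors indexed by $a,b,d$, and one must confirm (3.10)--(3.11) for arbitrary triples, not merely generator triples. I expect the argument to split along the three bases $\zeta_m^a,\zeta_n^b,\zeta_n^d$, since both sides of (3.10)--(3.11) are multiplicative in the $\Phi$-factors and each cyclotomic exponent can be compared independently; after this split, the discrepancy between the two sides reduces in each case to a telescoping identity involving only the bracket function, whose vanishing is the combinatorial content of the 3-cocycle condition already built into the formula (3.9).
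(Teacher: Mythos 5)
Your proposal is correct and follows essentially the same route as the paper: necessity by iterating the quasi-bicharacter equations along powers of $g_{1},g_{2}$ and using $g_{1}^{m}=g_{2}^{n}=e$ together with the vanishing of the bracket terms of $\Phi_{a,b,d}$ on generator arguments, and sufficiency by exhibiting the monomial $\R(g_{1}^{i}g_{2}^{j},g_{1}^{s}g_{2}^{t})=r_{11}^{is}r_{12}^{it}r_{21}^{js}r_{22}^{jt}$ and checking the axioms by direct exponent comparison against (3.9), where the relations (3.14) supply exactly the factors $\zeta_{m}^{-a[\frac{k+s}{m}]i}$, etc., needed to cancel $\Phi_{a,b,d}$. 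The only cosmetic difference is that the paper first records the symmetry $\Phi_{a,b,d}(x,y,z)=\Phi_{a,b,d}(x,z,y)$ to shorten the cocycle ratios, whereas you simplify them case by case.
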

\begin{proof} By the definition of $\Phi_{a,b,d}$, it is easy to see that $\Phi_{a,b,d}(x,y,z)=\Phi_{a,b,d}(x,z,y)$. Then (3.12) and (3.13) can be
simplified into \begin{gather}
\R(xy, z)=\R(x , z)\R(y ,
z)\Phi(z,x,y),\\
\R(x , yz)=\R(x , y)\R(x , z)\frac{1}{\Phi(x,y,z)}
\end{gather} for all $x,y,z \in \Z_{m}\times
\Z_{n}.$

 ``$\Rightarrow$".  Using (3.15) and (3.16) iteratively, we have $\R(g_{1},g_{1}^{i})=\R(g_{1},g_{1})^{i}$ and $\R(g_{1}^{i},g_{1})=\R(g_{1},g_{1})^{i}$
for $1\leq i\leq m-1$. Now
$$1=\R(g_{1},g_{1}^{m})=\R(g_{1},g_{1})\R(g_{1},g_{1}^{m-1})\frac{1}{\Phi(g_{1},g_{1},g_{1}^{m-1})}=\R(g_{1},g_{1})^{m}\frac{1}{\zeta_{m}^{a}},$$
$$1=\R(g_{1}^{m},g_{1})=\R(g^{m-1}_{1},g_{1})\R(g_{1},g_{1})\Phi(g_{1},g^{m-1}_{1},g_{1})=\R(g_{1},g_{1})^{m}{\zeta_{m}^{a}}.$$
Thus $r_{11}^{m}=\zeta_{m}^{a}=\zeta_{m}^{-a}$.  Similarly, we can show that $r_{22}^{n}=\zeta_{n}^{d}=\zeta_{n}^{-d}$.

Again by applying (3.15) iteratively, one can show that $\R(g^{i}_{1},g_{2})=\R(g_{1},g_{2})^{i}$ for $1\leq i\leq m-1$. Therefore,
$$1=\R(g_{1}^{m},g_{2})=\R(g^{m-1}_{1},g_{2})\R(g_{1},g_{2})\Phi(g_{2},g^{m-1}_{1},g_{1})=\R(g_{1},g_{2})^{m}{\zeta_{n}^{b}}.$$
This implies that $r_{12}^{m}=\zeta_{n}^{-b}$. By the definition of $\Phi$, it is not hard to see that $\Phi(g_{1},g_{2}^{i},g_{2}^{j})\equiv 1$ for all $i,j$. Combining this fact and (3.16), we have $\R(g_{1},g^{i}_{2})=\R(g_{1},g_{2})^{i}$ for $1\leq i\leq n$. So
$$1=\R(g_{1},g^{n}_{2})=\R(g_{1},g_{2})^{n}=r_{12}^{n}.$$  Similarly, we have $r_{21}^{n}=1$ and $r_{21}^{m}=\zeta_{n}^{b}$. The necessity is proved.

``$\Leftarrow$". Conversely, define a map $\R: G \times G \To k^*$  by setting $$\R(g_{1}^{i}g_{2}^{j},g_{1}^{s}g_{2}^{t})=r_{11}^{is}r_{12}^{it}r_{21}^{js}r_{22}^{jt}.$$ To show it is indeed a quasi-bicharacter, it is
enough to prove that (3.15) and (3.16) are satisfied. We only show (3.15) since (3.16) can be proved similarly.  Recall that for any integer $i\in \mathbb{N}$,
we denote by $i'$ and $i''$ the remainders of division of $i$ by $m$
and $n$ respectively. Now,
$$\R(g_{1}^{s}g_{2}^{t}\cdot g_{1}^{k}g_{2}^{l},g_{1}^{i}g_{2}^{j})=\R(g_{1}^{(s+k)'}g_{2}^{(t+l)''},g_{1}^{i}g_{2}^{j})=r_{11}^{i(k+s)'}r_{12}^{j(k+s)'}r_{21}^{i(t+l)''}r_{22}^{j(t+l)''}.$$
Then,
\begin{eqnarray*}
&&\R(g_{1}^{s}g_{2}^{t},g_{1}^{i}g_{2}^{j})\R(g_{1}^{k}g_{2}^{l},g_{1}^{i}g_{2}^{j})\Phi(g_{1}^{i}g_{2}^{j},g_{1}^{s}g_{2}^{t},g_{1}^{k}g_{2}^{l})\\
&&=r_{11}^{i(k+s)}r_{12}^{j(k+s)}r_{21}^{i(t+l)}r_{22}^{j(t+l)}\zeta_{m}^{a[\frac{k+s}{m}]i}
\zeta_{n}^{b[\frac{k+s}{m}]j}\zeta_{n}^{d[\frac{t+l}{n}]j}\\
&&=r_{11}^{i[(k+s)'+[\frac{k+s}{m}]m]}r_{12}^{j[(k+s)'+[\frac{k+s}{m}]m]}r_{21}^{i[(t+l)''+[\frac{t+l}{n}]n]}r_{22}^{j[(t+l)''+[\frac{t+l}{n}]n]}\\
&&\;\;\;\;\times \zeta_{m}^{a[\frac{k+s}{m}]i}
\zeta_{n}^{b[\frac{k+s}{m}]j}\zeta_{n}^{d[\frac{t+l}{n}]j}\\
&&=r_{11}^{i(k+s)'}\zeta_{m}^{-a[\frac{k+s}{m}]i}r_{12}^{j(k+s)'}\zeta_{n}^{-b[\frac{k+s}{m}]j}r_{21}^{i(t+l)''}r_{22}^{j(t+l)''}\zeta_{n}^{-d[\frac{t+l}{n}]j}\\
&&\;\;\;\;\times \zeta_{m}^{a[\frac{k+s}{m}]i}
\zeta_{n}^{b[\frac{k+s}{m}]j}\zeta_{n}^{d[\frac{t+l}{n}]j}\\
&&=r_{11}^{i(k+s)'}r_{12}^{j(k+s)'}r_{21}^{i(t+l)''}r_{22}^{j(t+l)''}\\
&&=\R(g_{1}^{s}g_{2}^{t}\cdot g_{1}^{k}g_{2}^{l},g_{1}^{i}g_{2}^{j}).
\end{eqnarray*}
The sufficiency is proved.
\end{proof}
\begin{remark} \emph{(a) Clearly, if there is a quasi-bicharacter with respect to $\Phi_{a,b,d}$, then either $a=0$ or $m$ is even and $a=\frac{m}{2}$. Similarly,  either $d=0$ or $n$ is even and $d=\frac{n}{2}$.}

\emph{(b) A quasi-bicharacter is said to be \emph{skew-symmetric} if $\R(x,y)\R(y,x)=1$ for all $x,y\in G$. In view of the above proposition, a quasi-bicharacter $\R$ is skew-symmetric if and only if $r_{11}^2=r_{22}^2=1$ and $r_{12}=r_{21}^{-1}.$ }
\end{remark}

\noindent\textbf{Convention.} Given any $\Phi_{a,b,d}$,  let $A_{a,b,d}$ be the set of sequences of $(r_{11},r_{12},r_{21},r_{22})$ satisfying equation (3.14). An element $(r_{11},r_{12},r_{21},r_{22})\in A_{a,b,d}$ is denoted by $\underline{r}$ and the quasi-bicharacter $\R$ determined by $\underline{r}$ is denoted by $\R_{\underline{r}}$. Note that $A_{a,b,d}$ might be an empty set.

\section{Braided monoidal structures on linear Gr-categories over $\Z_m \times \Z_n$ }

Let $G$ be a group. By $\Vec_G$ we denote the category of finite-dimensional $G$-graded spaces. Define the tensor product in this category by the formula $$(V\otimes W)_{g}=\bigoplus_{x,y\in  G, xy=g}V_{x}\otimes W_{y}$$
for all $x,y,g\in G.$ The linear Gr-category over $G$ is $(\Vec_G,\otimes)$. It is well-known that $(\Vec_G,\otimes)$ is  a monoidal category, and the monoidal structures on $\Vec_G$ are parameterized by the normalized 3-cocycles on $G.$ Given a normalized 3-cocycle $\Phi$ on $G,$ we denote the corresponding monoidal category by $\Vec_{G}^{\Phi}.$ Clearly, the monoidal category $\Vec_{G}^{\Phi}$ is braided if and only if there exists a quasi-bicharacter $\R$ with respect to $\Phi.$ The resulted braided monoidal category is written as $(\Vec_{G}^{\Phi},\R).$

Obviously, to classify the monoidal structures on $(\Vec_{G},\otimes)$ is equivalent to determining all the normalized $3$-cocycles, and to classify the braided structures on $\Vec_{G}^{\Phi}$ is equivalent to determining the quasi-bicharacters with respect to $\Phi$. Now we are ready to give our main classification results based on the preparations in the previous sections.

\begin{theorem} Let $G=\mathbb{Z}_{m}\times \mathbb{Z}_{n}$. Then any monoidal structure on the linear Gr-category over $G$ is tensor equivalent to $\Vec_{G}^{\Phi_{a,b,d}}$
for some $a\in \{0,\ldots,m-1\},
b\in \{0,\ldots,(m,n)-1\}$ and $d\in \{0,\ldots,n-1\}$.
\end{theorem}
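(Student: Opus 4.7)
The plan is to derive Theorem 4.1 essentially as an immediate corollary of Proposition 3.9 together with the classical classification of monoidal structures on linear Gr-categories. Recall from the work of Sính \cite{grc} (and stated in the introduction) that the monoidal structures on $\Vec_G$ are parametrized by the normalized $3$-cocycles on $G$ with coefficients in $k^{\ast}$, and moreover two such structures $\Vec_G^{\Phi}$ and $\Vec_G^{\Phi'}$ are tensor equivalent precisely when $\Phi$ and $\Phi'$ represent the same class in $\H^3(G, k^{\ast})$. So the theorem reduces to the purely cohomological statement that every element of $\H^3(\Z_m \times \Z_n, k^{\ast})$ is represented by some $\Phi_{a,b,d}$ in the prescribed range.

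First I would explicitly invoke this equivalence principle: given an arbitrary monoidal structure on $\Vec_G$, encode it as a normalized $3$-cocycle $\Phi \in \H^3(G, k^{\ast})$ (after passing to the bar complex, which is what indexes the associativity constraints on $\Vec_G$). Next I would appeal directly to Proposition 3.9, which asserts that the family $\{\Phi_{a,b,d} \mid 0 \le a < m,\ 0 \le b < (m,n),\ 0 \le d < n\}$ forms a complete set of representatives of the normalized $3$-cocycles; equivalently, by Corollary 3.8, they exhaust the $mn(m,n)$ elements of $\H^3(\Z_m \times \Z_n, k^{\ast}) \cong \Z_m \times \Z_{(m,n)} \times \Z_n$. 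Hence $\Phi$ is cohomologous to a unique $\Phi_{a,b,d}$, and consequently the monoidal category it defines is tensor equivalent to $\Vec_G^{\Phi_{a,b,d}}$.

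There is really no obstacle here beyond citing what has already been proved; the genuine content of the paper lies upstream, in the construction of the chain map $F_\bullet$ and the resulting explicit formula \eqref{3.8} for $\Phi_{a,b,d}$. The only minor verification worth flagging is that the representatives $\Phi_{a,b,d}$ produced in Subsection 3.4 are indeed normalized (i.e.\ vanish when any argument is the identity), which is visible by inspection of the formula since each exponent involves either a floor $\lfloor (k+s)/m \rfloor$ or $\lfloor (t+l)/n \rfloor$ that vanishes when the appropriate index is zero, and the remaining factors $i, j$ also vanish at the identity. With this, the theorem follows in one line from Proposition 3.9.
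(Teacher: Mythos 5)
Your proposal is correct and follows essentially the same route as the paper: reduce to Proposition 3.9 via the standard facts that monoidal structures on $\Vec_G$ correspond to normalized $3$-cocycles and that cohomologous cocycles yield tensor equivalent categories. The extra check that each $\Phi_{a,b,d}$ is normalized is a harmless addition already implicit in Proposition 3.9.
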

\begin{proof} Let $\Vec_{G}^{\Phi}$ be a monoidal structure on $(\Vec_{G},\otimes)$. By Proposition 3.9, $\Phi$ is cohomologous to ${\Phi_{a,b,d}}$ for some $a\in \{0,\ldots,m-1\},
b\in \{0,\ldots,(m,n)-1\}$ and $d\in \{0,\ldots,n-1\}$. At the same time, it is well-known that any two monoidal categories $\Vec_{G}^{\Phi_{1}}$,  $\Vec_{G}^{\Phi_{2}}$ are tensor equivalent provided $\Phi_{1}$  and $\Phi_{2}$ are cohomologous.
\end{proof}

\begin{theorem}Let $G=\mathbb{Z}_{m}\times \mathbb{Z}_{n}$. Then any  braided monoidal structure on $(\Vec_{G},\otimes)$   is tensor equivalent to $(\Vec_{G}^{\Phi_{a,b,d}},\R_{\underline{r}})$
for some $a\in \{0,\ldots,m-1\},
b\in \{0,\ldots,(m,n)-1\}$, $d\in \{0,\ldots,n-1\}$ and $\underline{r}\in A_{a,b,d}$.
\end{theorem}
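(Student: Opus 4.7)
The plan is to leverage Theorem 4.1 together with Proposition 3.12 by observing that a braided monoidal structure on $\Vec_G$ is completely determined by the pair $(\Phi,\R)$ consisting of a normalized 3-cocycle $\Phi$ and a quasi-bicharacter $\R$ with respect to $\Phi$, and that this data transports in a controlled way under the tensor equivalences induced by 2-cochains.

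First I would start with an arbitrary braided monoidal structure $(\Vec_G^\Phi,\R)$ and invoke Proposition 3.9 to write $\Phi = \Phi_{a,b,d}\cdot d^\ast(\eta)$ for a unique triple $(a,b,d)$ with $a\in\{0,\dots,m-1\}$, $b\in\{0,\dots,(m,n)-1\}$, $d\in\{0,\dots,n-1\}$ and a suitable normalized 2-cochain $\eta\in\Hom_{\Z G}(B_2,k^\ast)$. Next I would use the standard tensor equivalence $\Vec_G^\Phi\simeq\Vec_G^{\Phi_{a,b,d}}$ implemented by the identity functor with tensor constraint $\eta$. Under this equivalence the braiding $\R$ is transported to a map
\[
\R'(x,y)\;:=\;\R(x,y)\,\frac{\eta(y,x)}{\eta(x,y)},\qquad x,y\in G,
\]
and a direct check using the hexagon axioms for $\R$ together with the relation $d^\ast(\eta)=\Phi\cdot\Phi_{a,b,d}^{-1}$ shows that $\R'$ satisfies equations (3.10) and (3.11) with respect to $\Phi_{a,b,d}$; in other words, $\R'$ is a quasi-bicharacter for $\Phi_{a,b,d}$.

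Then I would apply Proposition 3.12 to the pair $(\Phi_{a,b,d},\R')$: setting $r_{ij}:=\R'(g_i,g_j)$, the tuple $\underline{r}=(r_{11},r_{12},r_{21},r_{22})$ automatically lies in $A_{a,b,d}$ and $\R'=\R_{\underline{r}}$. This identifies the given braided monoidal category with $(\Vec_G^{\Phi_{a,b,d}},\R_{\underline{r}})$ up to braided tensor equivalence, giving the claim.

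The main obstacle is purely the bookkeeping in the middle step: verifying that the cochain-twisted map $\R'$ is indeed a quasi-bicharacter with respect to $\Phi_{a,b,d}$. This amounts to unwinding the two hexagon diagrams and using the coboundary relation between $\Phi$ and $\Phi_{a,b,d}$, which is routine but slightly tedious; once that compatibility is established, Theorem 4.2 follows from Proposition 3.12 essentially formally, in parallel with how Theorem 4.1 follows from Proposition 3.9.
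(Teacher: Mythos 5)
Your proposal is correct and follows essentially the same route as the paper: twist the braiding by the antisymmetrization $\eta(y,x)/\eta(x,y)$ of the $2$-cochain realizing the coboundary relation between $\Phi$ and $\Phi_{a,b,d}$, check the twisted map is a quasi-bicharacter with respect to $\Phi_{a,b,d}$, and then invoke the classification of quasi-bicharacters to land in $A_{a,b,d}$. The only quibble is a reference slip: the classification of quasi-bicharacters you need is Proposition 3.11 (Remark 3.12 concerns skew-symmetry), and your convention $\Phi=\Phi_{a,b,d}\cdot d^{\ast}(\eta)$ is the inverse of the paper's $\Phi_{a,b,d}=\Phi\,\delta(f)$, which only replaces $f$ by $\eta^{-1}$ and changes nothing.
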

\begin{proof} Let $(\Vec_{G}^{\Phi},\R)$ be a  braided monoidal structure on $(\Vec_{G},\otimes)$. Similarly, $\Phi$ is cohomologous to ${\Phi_{a,b,d}}$ for some $a\in \{0,\ldots,m-1\},
b\in \{0,\ldots,(m,n)-1\}$ and $d\in \{0,\ldots,n-1\}$ by Proposition 3.9. So ${\Phi_{a,b,d}}=\Phi\delta(f)$ for a $2$-cochain $f\in \Hom_{\mathbb{Z}G}({B}_{2},k^{\ast})$. Define $\R'$ by $$\R'(x,y):=\R(x,y)f(x,y)f(y,x)^{-1},\;\;\;\;\;x,y\in G.$$
Direct computations show that $\R'$ is a quasi-bicharacter with respect to $\Phi_{a,b,d}$. Thus $\R'=\R_{\underline{r}}$ for some
$\underline{r}\in A_{a,b,d}$ by Proposition 3.11. Therefore  $(\Vec_{G}^{\Phi},\R)$ is tensor equivalent to $(\Vec_{G}^{\Phi_{a,b,d}},\R_{\underline{r}})$.
\end{proof}

\begin{remark} \emph{It is not hard to see that the braided monoidal category $(\Vec_{G}^{\Phi},\R)$ is symmetric if and only if $\R$ is skew-symmetric as defined in Remark 3.12.}
\end{remark}

\end{document}